\newtheorem{thm}{Theorem}[section]
\newtheorem{lem}{Lemma}[section]
\newtheorem{prop}{Proposition}[section]
\newtheorem{cor}{Corollary}[section]
\newtheorem{eg}{Example}[section]
\newtheorem{definition}{Definition}[section]
\newtheorem{q}{Question}[section]
\newtheorem{rem}{Remark}[section]
\newtheorem{claim}{Claim}[section]
\newtheorem{prob}{Problem}[section]
\crefname{thm}{Theorem}{Theorem}
\crefname{lem}{Lemma}{Lemma}
\crefname{prop}{Proposition}{Proposition}
\crefname{cor}{Corollary}{Corollary}
\crefname{eg}{Example}{Example}
\crefname{definition}{Definition}{Definition}
\crefname{q}{Question}{Question}
\crefname{rem}{Remark}{Remark}
\crefname{conj}{Conjecture}{Conjecture}
\crefname{claim}{Claim}{Claim}
\crefname{prob}{Problem}{Problem}
\def\fr#1{\mathfrak{#1}}
\def\bb#1{\mathbb{#1}}
\def\ali#1{\begin{align}#1\end{align}}
\def\dis#1{\displaystyle{#1}}
\def\w{\wedge}
\def\bw{\bigwedge}
\def\o{\oplus}
\def\bo{\bigoplus}
\def\ang#1{\langle #1\rangle}
\def\dim#1{{\rm dim\ }#1}
\def\0{\{ 0\}}
\def\Ker#1{{\rm Ker}\ #1}
\def\Im#1{{\rm Im}\ #1}
\title[The nilpotent funfdamental groups]{On the torsion-free nilpotent fundamental groups of smooth quasi-projective varieties of rank up to seven}
\author{Taito Shimoji}
\date{}
\address{Department of Mathematics, Graduate School of Science, The University of Osaka, Osaka, Japan}
\email{u215629i@ecs.osaka-u.ac.jp}
\begin{document}
\maketitle
\begin{abstract}
Let $X$ be a smooth quasi-projective variety. Assume that the (topological) fundamental group $\pi_1(X, x)$ is torsion-free nilpotent. We show that if the first Betti number $b_1(X) \le 3$, then $\pi_1(X, x)$ is isomorphic to either $\mathbb{Z}^n$ for $n = 1, 2, 3$, a lattice in the Heisenberg group $H_3(\mathbb{R})$ or $\mathbb{R} \times H_3(\mathbb{R})$. Moreover, we prove that $\pi_1(X, x)$ is abelian or $2$-step nilpotent if its rank is less than or equal to seven. More precisely, we determine the real nilpotent Lie groups in which torsion-free nilpotent fundamental groups can be embedded as lattices for ranks up to six and seven, respectively. Our main results are a partial positive answer to a question on nilpotent (quasi-)K\"ahler groups posed by Aguilar and Campana.
\end{abstract}
\section{Introduction}
\subsection{Aguilar and Campana's question for the nilpotent fundamental groups of smooth quasi-projective varieties}

The following question is given by Aguilar and Campana in \cite{RodCam}.
\begin{q}
Let $G$ be a torsion-free nilpotent Kähler group. Can its
 nilpotency class $\nu (G)$ be at least $3$? Can $\nu (G)$ be arbitrarily large?
 Same questions for the quasi-Kähler groups.
\end{q}
This question is mainly motivated by the previous works of Campana and others (for example, see \cite{Cam},\cite{Cam2},\cite{AraNori} and \cite{CadDengYam},etc). We focus on the case that algebraic varieties are smooth quasi-projective. Thus we consider the following problem. 
\begin{prob}\label{Campana}
Let $X$ be a smooth quasi-projective variety over $\bb{C}$. Assume that the (topological) fundamental group $\pi_1(X,x)$ is torsion-free nilpotent. Is $\pi_1(X,x)$ abelian or $2$-step nilpotent?
\end{prob}
In fact, all the known examples of the nilpotent fundamental groups of smooth quasi-projective varieties are either abelian or $2$-step nilpotent (see \cite{Pierre}, \cite{Cam},\cite{JohVen} and \cite{CadDengYam}). 
 
\subsection{Main theorems}
Let $X$ be a smooth quasi-projective variety over $\bb{C}$. The first betti number $b_1(X)$ is given by 
\ali{b_1(X):=\dim H^1(X,\bb{R})}
where $H^1(X,\bb{R})$ is the first real cohomology of $X$.
The first main theorem is as follows:
\begin{thm}[\protect\cref{b1<=3}]
Let $X$ be a smooth quasi-projective variety over $\bb{C}$. If $b_1(X)\leq 3$ and $\pi_1(X,x)$ is torsion-free nilpotent, then $\pi_1(X,x)$ is abelian or $2$-step nilpotent. More precisely, if $b_1(X)\leq 3$ and $\pi_1(X,x)$ is torsion-free nilpotent, then $\pi_1(X,x)$ is isomorphic to $\bb{Z}$, $\bb{Z}^2$, $\bb{Z}^3$, a lattice in $H_3(\bb{R})$ or $H_3(\bb{R})\times\bb{R}$ where $H_3(\bb{R})$ is $3$-dimensional Heisenberg group.
\end{thm}
Let $\Gamma$ be a torsion-free $s$-step nilpotent group. A \textit{rank} $rk(\Gamma )$ of $\Gamma$ is defined as 
\ali{
rk(\Gamma ):=\sum_{i=1}^{s}rk(C^i\Gamma /C^{i+1}\Gamma )
}
where $C^i\Gamma$ is the lower central series and $C^{s+1}\Gamma=\{ e\}$. Note that \ali{
b_1(X)\leq rk(\pi_1(X,x)).
}
Next we study the case of higher betti numbers. By restricting the rank of $\pi_1(X,x)$, we obtain the second main theorem concerning \cref{Campana}.   
\begin{thm}[\cref{rk<=7}]\label{main3}
Let $X$ be a smooth quasi-projective variety. Assume that $\pi_1(X,x)$ is torsion-free nilpotent. If $rk(\pi_1(X,x))\leq 7$, then $\pi_1(X,x)$ is abelian or $2$-step nilpotent.
\end{thm}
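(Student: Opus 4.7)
The plan is to work with the rational Malcev Lie algebra $\fr{g}:=\pi_1(X,x)\otimes_{\bb{Z}}\bb{Q}$, which by Morgan's theorem carries a functorial mixed Hodge structure and hence, after complexification, admits a bigraded decomposition $\fr{g}_{\bb{C}}=\bo_{p,q}\fr{g}^{p,q}$ satisfying the bracket compatibility $[\fr{g}^{p,q},\fr{g}^{p',q'}]\subset\fr{g}^{p+p',q+q'}$ and the Hodge symmetry $\overline{\fr{g}^{p,q}}=\fr{g}^{q,p}$. One has $rk(\pi_1(X,x))=\dim\fr{g}$ (over $\bb{Q}$) and $b_1(X)=\dim{(\fr{g}/[\fr{g},\fr{g}])}$. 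Since $X$ is smooth quasi-projective, $H^1(X,\bb{C})$ has weights in $\{1,2\}$ with the weight-$2$ part of pure Hodge type $(1,1)$, so dually the generators of $\fr{g}_{\bb{C}}$ lie only in bidegrees $(-1,0)$, $(0,-1)$ and $(-1,-1)$; writing $a:=\dim\fr{g}^{-1,0}=\dim\fr{g}^{0,-1}$ and $b:=\dim\fr{g}^{-1,-1}$ gives $b_1(X)=2a+b$. Moreover the lower central series is contained in the weight filtration, $C^k\fr{g}\subset W_{-k}\fr{g}$.

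By \cref{b1<=3} one may assume $b_1(X)\geq 4$, and the rank bound $rk(\pi_1(X,x))\leq 7$ restricts the possible shapes $(\dim C^1/C^2,\dim C^2/C^3,\ldots)$ to a finite list. The cases $b_1=7$ (forcing $\fr{g}$ abelian) and $b_1=6$ (where $\dim[\fr{g},\fr{g}]\leq 1$ makes $C^3=0$ automatic) are immediate. The remaining non-$2$-step candidates to be excluded are $(5,1,1)$ for $b_1=5$, and $(4,2,1)$, $(4,1,2)$, $(4,1,1,1)$ for $b_1=4$.

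For each such candidate I would enumerate the admissible pairs $(a,b)$ with $2a+b=b_1$ and use the bigraded bracket to determine the possible bidegree distributions on each $C^i/C^{i+1}$. A non-trivial $C^3/C^4$ lies in bidegrees $(p,q)$ with $p+q\leq -3$ and can arise only from iterated brackets of weight-$(-1)$ generators or from brackets of a weight-$(-1)$ generator with a weight-$(-2)$ element of $C^2$; the admissible bracket inputs and their dimensions are then controlled by $a$, $b$ and the rank bound. Combining this with the Jacobi identity and the Hodge symmetry eliminates most configurations on purely bigraded grounds.

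The main obstacle is expected to be the case $b_1=4$ with $(a,b)=(2,0)$, where all generators sit in pure weight $-1$ and the situation mimics the smooth projective case. The pieces $\fr{g}^{-1,0}\o\fr{g}^{0,-1}$ have combined dimension $4$ while $[\fr{g},\fr{g}]$ may reach dimension up to $3$, leaving the richest bigraded landscape for a potential $C^3\neq 0$. Eliminating this possibility requires exploiting the cup product $\bw^2 H^1(X,\bb{C})\to H^2(X,\bb{C})$ as a morphism of mixed Hodge structures, together with the Jacobi identity and the dimensional constraints coming from the rank bound. Once the $b_1=5$ case and all $(a,b)$ subcases of $b_1=4$ have been disposed of in this manner, no torsion-free nilpotent $\pi_1(X,x)$ of rank $\leq 7$ can have nilpotency class $\geq 3$, and \cref{main3} follows.
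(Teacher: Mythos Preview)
Your outline follows the paper's strategy closely: reduce to $b_1\geq 4$ via \cref{b1<=3}, list the admissible lower-central-series profiles, and run a bigraded case analysis indexed by $(a,b)$. The paper's proof (\cref{geq8}, \cref{rk<=6}, \cref{rk=7}) proceeds in exactly this way, so the architecture is right.

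Where your plan has a genuine gap is in the weight you assign to the $H^2$ constraint. You say that ``the Jacobi identity and the Hodge symmetry eliminate most configurations on purely bigraded grounds,'' reserving the cup-product / $H^2$ argument only for the ``main obstacle'' $(a,b)=(2,0)$. This is not so. Jacobi plus Hodge symmetry alone do \emph{not} exclude, for instance, a $7$-dimensional $3$-step example with $(a,b)=(0,4)$: take $X_1,\dots,X_4\in\fr{g}_{-1,-1}$ real, set $[X_1,X_2]=Z_1$, $[X_3,X_4]=Z_2$, $[X_1,Z_1]=W$ and all other brackets zero. Jacobi holds and the real structure is diagonal, yet $x_2\w z_1$ is a closed, non-exact element of $C_{3,3}^2$, so $H_{3,3}^2\neq 0$ and it is only condition {\bf (W)} that kills this algebra. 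The paper's \cref{geq8} in fact uses the $H^2$ constraint in \emph{every} subcase $(2,2,0)$, $(1,1,2)$, $(0,0,4)$, not just the first.

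The precise mechanism, which your proposal should make explicit, is this: Morgan's condition {\bf (W)} forces $H^2(\fr{g}_{\bb{C}})$ to live only in bidegrees $(p,q)$ with $p,q\leq 2$; hence any $d$-closed $2$-cochain in a forbidden bidegree such as $(3,1)$ or $(3,3)$ must be \emph{exact}, which manufactures a new element of $\fr{g}^*$ (equivalently of $\fr{g}$) in that bidegree and pushes $\dim\fr{g}$ past $7$. Framing this via the cup product $\bw^2 H^1(X)\to H^2(X)$ is one step removed; the paper works directly in the bigraded Chevalley--Eilenberg complex of $\fr{g}$, where the argument becomes a clean dimension count on $\Ker d\cap C_{p,q}^2$ versus $\dim\fr{g}_{-p,-q}$.
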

The proof of the above theorem follows from the condition of the bigraded Lie algebras such that it is shown by Morgan (\cite[Theorem {\bf (9.4)}]{Morgan}). Using the condition of bigraded Lie algebras, we determine the simply-connected real nilpotent Lie groups in which the torsion-free nilpotent fundamental groups can be embedded as lattices. The followings \cref{prop1} and \cref{prop2} are the cases of $rk(\pi_1(X,x))\leq 6$ and $rk(\pi_1(X,x))=7$ respectively.
\begin{thm}[\cref{rk<=6}]\label{prop1}
Let $X$ be a smooth quasi-projective variety. If $\pi_1(X,x)$ is torsion-free nilpotent and $rk(\pi_1(X,x))\leq 6$, then $\pi_1(X,x)$ is a lattice in one of the following simply-connected nilpotent Lie groups:
\ali{
\bullet\ &\bb{R}&\text{ if }b_1(X)=1,\\
\bullet\ &\bb{R}^2\text{ or }H_3(\bb{R})&\text{ if }b_1(X)=2,\\
\bullet\ &\bb{R}^3\text{ or }\bb{R}\times H_3(\bb{R})&\text{ if }b_1(X)=3,\\
\bullet\ &\bb{R}^4, \bb{R}^2\times H_3(\bb{R}),H_3(\bb{R})\times H_3(\bb{R})\text{ or }H_5(\bb{R})&\text{ if }b_1(X)=4,\\
\bullet\ &\bb{R}^5, \bb{R}^3\times H_3(\bb{R})\text{ or }\bb{R}\times H_5(\bb{R})&\text{ if }b_1(X)=5,\\
\bullet\ &\bb{R}^6&\text{ if }b_1(X)=6 
}  
where $H_{2k+1}(\bb{R})$ is $(2k+1)$-dimensional Heisenberg group.
\end{thm}
\begin{thm}[\cref{rk=7}]\label{prop2}
Let $X$ be a smooth quasi-projective variety. If $\pi_1(X,x)$ is torsion-free nilpotent and $rk(\pi_1(X,x))=7$, then $\pi_1(X,x)$ is isomorphic to a lattice in a simply-connected real  nilpotent Lie group such that the complexification $\fr{n}_{\bb{C}}$ of the Lie algebra $\fr{n}$ of $N$ is isomorphic to one of the following Lie algebras: 
\ali{
\bullet\ &\bb{C}^7&\text{ if }b_1(X)=7,\\
\bullet\ &\bb{C}^4\o\fr{n}_3(\bb{C}), \bb{C}^2\o L_{5,4} \text{ or }\fr{n}_7^{152}&\text{ if }b_1(X)=6,\\
\bullet\ &\bb{C}\o\fr{n}_3(\bb{C})\o\fr{n}_3(\bb{C})\text{ or }(\text{other $2$-step nilpotent such that $\dim H^1(\fr{n})=5$})&\text{ if }b_1(X)=5,\\
\bullet\ &\fr{n}_7^{142}\text{ or }\fr{n}_7^{143}&\text{ if }b_1(X)=4
}
where $L_{5,4}$ is the complexification of the Lie algebra of $H_5(\bb{R})$, $\fr{n}_7^{152}$ is the complexification of the Lie algebra of $H_7(\bb{R})$, $\fr{n}_7^{142}$ and $\fr{n}_7^{143}$ are given by
\ali{
\fr{n}_7^{142}=\ang{X_1,\dots,X_7}: [X_1,X_i]=X_{i-1}\text{ for i=3,5,7 },[X_3,X_5]=X_4,[X_5,X_7]=X_2,
}
and
\ali{
\fr{n}_7^{143}=\ang{X_1,\dots,X_7}: [X_1,X_i]=X_{i-1}\text{ for i=3,5,7 },[X_3,X_5]=X_6,[X_5,X_7]=X_2
.}
\end{thm}
Note that the Lie algebras $\fr{n}_7^{142}$ and $\fr{n}_7^{143}$ are not isomorphic to $(2k+1)$-dimensional Heisenberg Lie algebra for any $k$. 
\begin{rem}[\cref{8dim}]
If $rk(\pi_1(X,x))=8$ and $\pi_1(X,x)$ is torsion-free nilpotent, then we cannot conclude whether \cref{Campana} holds by using the method of the proof of main theorems. In particular, there exists an example of a $3$-step nilpotent group which may be realized as the fundamental group of a smooth quasi-projective variety. The example is a lattice in the following real Lie group:
\ali{
N(\bb{R}):=\left \{\begin{pmatrix}
1&0&y_2&y_1&-z_2&-z_1&0&\frac{1}{2}(y_1+y_2)&3b\\
0&1&x_2&x_1&0&\frac{1}{2}(x_1+x_2)&-z_2&-z_1&3a\\
0&0&1&0&x_1&x_2&-y_1&-y_2&2z_2\\
0&0&0&1&x_2&x_1&-y_2&-y_1&2z_1\\
0&0&0&0&1&0&0&0&y_2\\
0&0&0&0&0&1&0&0&y_1\\
0&0&0&0&0&0&1&0&x_2\\
0&0&0&0&0&0&0&1&x_1\\
0&0&0&0&0&0&0&0&1\\
\end{pmatrix}
;
x_1,x_2,y_1,y_2,z_1,z_2,a,b\in\bb{R}\right \}
.
}
Moreover, the Lie algebra of $N(\bb{R})$ is given by
\ali{
\fr{g}:=\ang{X_1,X_2,Y_1,Y_2,Z_1,Z_2,A,B}:&[X_1,Y_1]=Z_1=[X_2,Y_2], [X_2,Y_1]=Z_2=[X_1,Y_2],\\
&[X_1,Z_1]=A=[X_2,Z_2],[Y_1,Z_1]=B=[Y_2,Z_2]
.}
\end{rem}
\subsection{Acknowledgements}
The author thanks his supervisors, Professor Hisashi Kasuya, Nagoya University, for his enormous support and pointing out how to state the main theorems. He is also grateful to Professor Katsutoshi Yamanoi,  The University of Osaka, for his valuable comments regarding the example in \cref{8dim}.
The author also benefited greatly from Rogov’s paper \cite{Rogov}, which was particularly helpful for \cref{Campana}. This work was supported by JST SPRING, Grant Number JPMJSP2138.
\section{Mixed Hodge theory on nilpotent fundamental groups of smooth quasi-projective varieties} 
\subsection{Mixed Hodge structures}(\cite{Del},\cite{Morgan},\cite{PeterSteen})
An  $\bb{R}$-\textit{mixed Hodge structure} $(V,W,F)$ is defined by the following data:
\begin{itemize}
\item $V$ is an $\bb{R}$-vector space.
\item $W=\{W_k(V)\}_{k\in\bb{Z}}$ is a finite increasing filtration on $V$.
\item $F=\{F^p(V_{\bb{C}})\}_{p\in\bb{Z}}$ is a finite decreasing filtration on the complexification $V_{\bb{C}}:=V\otimes\bb{C}$ of $V$ and defines a Hodge structure on 
\ali{
Gr_k^W(V_\bb{C}):=\frac{W_k(V)\otimes\bb{C}}{W_{k-1}(V)\otimes\bb{C}}
}
of weight $k$ for all $k\in\bb{Z}$. The decreasing filtration is given by
\ali{
F^p(Gr_k^W(V_\bb{C}))=\frac{F^p(V_{\bb{C}})\cap (W_k\otimes \bb{C})}{W_{k-1}\otimes\bb{C}}.
}
{\it i.e.} there exists a bigrading $Gr_k^W(V_{\bb{C}})=\bo_{p+q=k}\mathcal{H}_{p,q}$ such that
\ali{
\overline{\mathcal{H}_{p,q}}=\mathcal{H}_{q,p}\ \text{ for all }p,q\in\bb{Z}\text{ with }p+q=k.
}
\end{itemize}
Let $(V,W,F)$ be a mixed Hodge structure. Equivalently, we have the bigrading $V_{\bb{C}}=\bo_{p,q}V_{p,q}$ such that
\ali{
\overline{V_{p,q}}\equiv V_{q,p}\mod \bo_{s+t<p+q}V_{s,t}.
}
Here, $V_{p,q}$ is given by
\ali{
V_{p,q}:=F^p(V_{\bb{C}})\cap W_{p+q}(V_{\bb{C}})\cap \left (\overline{F^q(V_{\bb{C}})}\cap W_{p+q}(V_{\bb{C}})+\sum_{i\geq 2}\overline{F^{q-i+1}(V_{\bb{C}})}\cap W_{p+q-i}(V_{\bb{C}})\right)}
and $W_k(V_{\bb{C}}):=W_k\otimes\bb{C}$. Moreover, the bigrading satisfies
\ali{
W_k(V_{\bb{C}})&=\bo_{p+q\leq k}V_{p,q}
}
and
\ali{
F^p(V_{\bb{C}})&=\bo_{s\geq p,t\in\bb{Z}}V_{s,t}
.}
\subsection{Mixed Hodge structures on Lie algebras}
\begin{definition}
Let $\fr{g}$ be an $\bb{R}$-Lie algebra. The bigrading of a mixed Hodge structure on $\fr{g}$ is a bigrading $\fr{g}_{\bb{C}}=\bo_{p,q\in \bb{Z}}\fr{g}_{p,q}$ satisfying
\ali{
\overline{\fr{g}_{p,q}}\equiv \fr{g}_{q,p}\mod \bo_{k+l<p+q}\fr{g}_{k,l}
}and
\ali{
[\fr{g}_{p,q},\fr{g}_{s,t}]\subset \fr{g}_{p+s,q+t}
}
for all $p,q,s,t\in\bb{Z}$. In particular, the filtrations $F$ on $V_{\bb{C}}$ and $W$ on $V_{\bb{C}}$ are given by
\ali{
W_k(\fr{g}_{\bb{C}})&=\bo_{p+q\leq k}\fr{g}_{p,q}
}
and
\ali{
F^p(\fr{g}_{\bb{C}})&=\bo_{s\geq p,t\in\bb{Z}}\fr{g}_{s,t}.
}
\end{definition}
Let $\fr{g}_{\bb{C}}=\bo_{p,q\in \bb{Z}}\fr{g}_{p,q}$ be the bigrading of a mixed Hodge structure on a Lie algebra $\fr{g}$. By the compatibility with the Lie bracket of $\fr{g}$, we have the bigrading of a mixed Hodge structure on the Lie algebra cohmology $H^*(\fr{g}_{\bb{C}})=\bo_{p,q\in\bb{Z}}H_{p,q}^*$. More precisely, we define a subspace $C_{p,q}^j\subset \bw^j\fr{g}_{\bb{C}}^*$ to be 
\ali{
C_{p,q}^j:=\bo_{\substack{p=-(p_1+\cdots +p_j)\\ q=-(q_1+\cdots +q_j)}}\fr{g}_{p_1,q_1}^*\w\cdots\w\fr{g}_{p_j,q_j}^*
}
where $\fr{g}_{p_1,q_1}^*\w\cdots\w\fr{g}_{p_j,q_j}^*$ is a subvector space generated by the set 
\ali{
\{ f_1\w\cdots\w f_j\mid f_i\in \fr{g}_{p_i,q_i}^*\}.
} 
Then we can show the following equalities: 
\ali{
\bw ^j\fr{g}_{\bb{C}}^*&=\bo_{p,q\in\bb{Z}}C_{p,q}^j,\\
d(C_{p,q}^j)&\subset C_{p,q}^{j+1}
}
and
\ali{
H^j(\fr{g}_{\bb{C}})&=\bo_{s,t\in\bb{Z}}H_{s,t}^j
}
where $d$ is the differential of $\fr{g}_{\bb{C}}$ and 
\ali{
H_{s,t}^j:=\dis{\frac{\Ker{(d:C_{s,t}^j\rightarrow C_{s,t}^{j+1})}}{\Im{(d:C_{s,t}^{j-1}\rightarrow C_{s,t}^{j})}}}.
}
Given a basis for each homogeneous component of a bigraded vector space, for instance, we denote the degree $(p,q)$ component, whose basis consists of $X_1\dots X_n$ by $\ang{X_1,\dots ,X_n}_{p,q}.$ 
\begin{eg}\label{Heisenberg}
Let $\fr{n}_3(\bb{R})$ the $3$-dimensional Heisenberg Lie algebra. We can take a basis of the complexification $\fr{n}_3(\bb{R})\otimes\bb{C}=\fr{n}_3(\bb{C})=\ang{X_1,X_2,X_3}$ such that $[X_1,X_2]=X_3,[X_1,X_3]=0=[X_2,X_3]$ and $X_2=\overline{X_1}$. We define the bigrading on $\fr{n}_3(\bb{C})$ by
\ali{
\fr{n}_3(\bb{C})=\ang{X_1}_{-1,0}\o\ang{X_2}_{0,-1}\o\ang{X_3}_{-1,-1}
.}
The bigrading is a mixed Hodge structure on $\fr{n}_3(\bb{R})$. The induced bigradings on the cohomologies are given by
\ali{
H^1(\fr{n}_3(\bb{C}))&=H_{1,0}^1\o H_{0,1}^1=\ang{[x_1]}_{1,0}\o\ang{[x_2]}_{0,1},\\
H^2(\fr{n}_3(\bb{C}))&=H_{2,1}^2\o H_{1,2}^2=\ang{[x_1\w x_3]}_{2,1}\o\ang{[x_2\w x_3]}_{1,2},\\
H^3(\fr{n}_3(\bb{C}))&=H_{2,2}^3=\ang{[x_1\w x_2\w x_3]}_{2,2}
}
where $x_1,x_2$ and $x_3$ are the dual basis of $X_1,X_2$ and $X_3$.
\end{eg}
\subsection{Bigradings of mixed Hodge structures and a restriction on the nilpotent fundamental groups of smooth quasi-projective varieties}
Let $X$ be a smooth quasi-projective variety. Assume that $\Gamma =\pi_1(X,x)$ is torsion-free nilpotent. By \cite{Malcev}, there uniquely exists a simply-connected nilpotent Lie group $N$ such that $\Gamma$ is a lattice (discrete and cocompact subgroup) in $N$. In \cite{Morgan}, Morgan showed the necessary condition of the Lie algebra $\fr{n}$ of $N$ in terms of the bigrading of a mixed Hodge structure on $\fr{n}$. More precisely, Morgan showed the following theorem.
\begin{thm}\cite[Theorem {\bf (9.4)}]{Morgan}\label{W}
Let $X$ be a smooth quasi-projective variety. Let $N$ be a simply-connected real nilpotent Lie group. Assume that $\pi_1(X,x)$ is a lattice in $N$. For the Lie algebra $\fr{n}$ of $N$, there exists the bigrading $\fr{n}_{\bb{C}}=\bo_{p,q\leq 0,p+q\leq -1}\fr{n}_{p,q}$ of a mixed Hodge structure such that for the induced bigrading $H^*(\fr{g}_{\bb{C}})=\bo_{p,q\geq 0,p+q\geq 1}H_{p,q}^*$, we have 
\ali{
{\bf (W)}: H^1(\fr{n}_{\bb{C}})&=H_{1,0}^1\o H_{0,1}^1\o H_{1,1}^1,\\
H^2(\fr{n}_{\bb{C}})&=H_{2,0}^2\o H_{1,1}^2\o H_{0,2}^2\o H_{2,1}^2\o H_{1,2}^2\o H_{2,2}^2.
}
\end{thm}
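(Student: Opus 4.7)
The plan is to deduce \textbf{(W)} from Morgan's theorem giving a mixed Hodge structure on the Sullivan 1-minimal model of $X$, together with the equivalence between 1-minimal models and Malcev Lie algebras. First, I would choose a smooth projective compactification $\bar{X}\supset X$ whose boundary $D=\bar{X}\setminus X$ is a simple normal crossing divisor, and recall Deligne's MHS on $H^*(X,\bb{C})$ coming from the logarithmic de Rham complex $\Omega^*_{\bar{X}}(\log D)$. For $X$ smooth quasi-projective, the weight spectral sequence, whose $E_1$-page is built from $H^*$ of the intersection strata of $D$ with appropriate Tate twists, shows that the only Hodge types $(p,q)$ appearing in $H^1(X,\bb{C})$ are $(1,0),(0,1),(1,1)$ and those appearing in $H^2(X,\bb{C})$ are $(2,0),(1,1),(0,2),(2,1),(1,2),(2,2)$; these are precisely the bidegrees listed in \textbf{(W)}.

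Next, I would invoke Morgan's theorem (\cite{Morgan}) upgrading this MHS to the CDGA level: a suitable model $A^*$ of $X$ is a mixed Hodge CDGA, and the Sullivan 1-minimal model $\cal{M}(1)$ inherits a multiplicative MHS such that the quasi-isomorphism $\cal{M}(1)\to A^*$ is a morphism of mixed Hodge CDGAs. Because morphisms of MHS are strict, the induced map gives an isomorphism $H^1(\cal{M}(1))\cong H^1(X,\bb{C})$ of MHS and a sub-MHS inclusion $H^2(\cal{M}(1))\hookrightarrow H^2(X,\bb{C})$, so these cohomologies carry only the Hodge bidegrees enumerated above.

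The final step is the passage to the Lie algebra. By Sullivan-Malcev duality for torsion-free nilpotent fundamental groups, $\cal{M}(1)\otimes\bb{C}$ is canonically isomorphic as a CDGA to the Chevalley-Eilenberg complex $\bw^*\fr{n}_{\bb{C}}^*$ of the complex Malcev Lie algebra $\fr{n}_{\bb{C}}$ of $\pi_1(X,x)$. Dualizing the MHS bigrading on $\cal{M}(1)^1\otimes\bb{C}$ produces a bigrading $\fr{n}_{\bb{C}}=\bo \fr{n}_{p,q}$; compatibility with the bracket is the Lie-algebraic dual of compatibility of the MHS with multiplication on $\cal{M}(1)$. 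The sign flip under dualization, together with the fact that cohomological weights on the generators of $\cal{M}(1)$ are positive, forces $\fr{n}_{p,q}=0$ unless $p,q\leq 0$ and $p+q\leq -1$, and returns the induced bigrading on $H^*(\fr{n}_{\bb{C}})$ to the region $p,q\geq 0$, $p+q\geq 1$. Combined with the Hodge-type bookkeeping of the first paragraph, this gives exactly the decomposition in \textbf{(W)}.

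The main obstacle is Morgan's construction itself, which requires an intricate compatibility between the weight filtration on $\Omega^*_{\bar{X}}(\log D)$ and the step-by-step Sullivan minimal-model algorithm, so that each stage of $\cal{M}(1)$ can be endowed with a MHS compatible with the next. I would import this as a black box from \cite{Morgan}. Everything else, namely the Deligne weight spectral sequence computation for $H^1$ and $H^2$ of a smooth quasi-projective variety, the Chevalley-Eilenberg duality between $\cal{M}(1)$ and $\fr{n}_{\bb{C}}$, and the sign conventions relating weights on $\fr{n}_{\bb{C}}$ to weights on $H^*(\fr{n}_{\bb{C}})$, is formal once Morgan's theorem is accepted.
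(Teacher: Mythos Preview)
The paper does not supply a proof of this theorem; it is quoted as a result of Morgan \cite{Morgan} and used as a black box throughout. Your proposal is a faithful sketch of Morgan's original argument: Deligne's weight bounds on $H^1$ and $H^2$ of a smooth quasi-projective variety, Morgan's mixed Hodge CDGA structure on the $1$-minimal model $\cal{M}(1)$, the identification of $H^1(\cal{M}(1))\cong H^1(X)$ and $H^2(\cal{M}(1))\hookrightarrow H^2(X)$ as morphisms of MHS, and the Chevalley--Eilenberg duality with the Malcev Lie algebra are exactly the ingredients Morgan assembles. So there is nothing to compare against in the present paper, and your outline is correct as a summary of the cited reference.
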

The condition {\bf (W)} is derived from the bigradings of canonical mixed Hodge structures on the cohomologies of complex algebraic varieties (see \cite{Del}). Consequently, we have
\begin{cor}
Let $\Gamma$ be a lattice in a simply-connected nilpotent Lie group $N$. If the complexification of the Lie algebra $\fr{n}_{\bb{C}}$ of N does not have the bigrading of any mixed Hodge structure satisfying the condition {\bf (W)}, then $\Gamma$ is not isomorphic to the fundamental group of any smooth quasi-projective variety.
\end{cor}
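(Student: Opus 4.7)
The statement is essentially the contrapositive of \cref{W}, so my plan is to argue by contradiction and reduce directly to Morgan's theorem, with the only genuine ingredient being Malcev's uniqueness result that guarantees the Lie algebra $\fr{n}$ is an invariant of the abstract group $\Gamma$.

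More concretely, suppose for contradiction that $\Gamma$ is isomorphic to $\pi_1(X,x)$ for some smooth complex algebraic variety $X$. By hypothesis $\Gamma$ is a lattice in the simply-connected nilpotent Lie group $N$, so $\Gamma$ is a finitely generated torsion-free nilpotent group. Malcev's theorem guarantees that such a group embeds as a lattice in a simply-connected nilpotent Lie group uniquely up to isomorphism; in particular, any simply-connected nilpotent Lie group in which $\pi_1(X,x)$ sits as a lattice is isomorphic to $N$, and its Lie algebra is isomorphic to $\fr{n}$.

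Now apply \cref{W} to the pair $(X, N)$: there exists a bigrading $\fr{n}_{\bb{C}}=\bo_{p,q\leq 0,\, p+q\leq -1}\fr{n}_{p,q}$ of a mixed Hodge structure on $\fr{n}$ whose induced bigrading on cohomology satisfies condition \textbf{(W)}. This contradicts the assumption that $\fr{n}_{\bb{C}}$ carries no such bigrading, so no such $X$ can exist.

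The hard part is really absent here: everything is a formal consequence of \cref{W} together with the uniqueness of the Malcev completion of a torsion-free nilpotent group. The place where the content lives is in applying this corollary — i.e., showing for specific candidate Lie algebras that no bigrading of a mixed Hodge structure compatible with \textbf{(W)} exists — which will be the substance of the later sections rather than of this corollary itself.
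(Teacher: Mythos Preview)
Your argument is correct and matches the paper's treatment: the paper gives no proof at all, introducing the corollary with ``Consequently, we have'' immediately after \cref{W}, so it regards the statement as the formal contrapositive. Your explicit invocation of Malcev's uniqueness to justify that $\fr{n}$ is an invariant of $\Gamma$ is the one point worth spelling out, and the paper does rely on exactly this fact (cited just before \cref{W}).
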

\begin{eg}\label{abelians}
Let $\bb{C}$ be the $1$-dimensional abelian Lie algebra. Take a non-zero element $X$. Then the bigrading $\bb{C}=\ang{X}_{-1,-1}$ satisfies the conditions {\bf (W)}. In the case of the $2$-dimensional abelian Lie algebra $\bb{C}^2$, we may set $\bb{C}^2=\ang{X}_{-1,0}\o\ang{\overline{X}}_{0,-1}$ where $X$ and $\overline{X}$ form a basis of $\bb{C}^2.$  
\end{eg}
\begin{prop}\label{trivialextension}
Let $\fr{n}$ be a $\bb{C}$-Lie algebra which admits the bigrading of a mixed Hodge structure in \cref{W}. Then any trivial extension $\fr{n}\o\bb{C}^m$ also admits such a bigrading.
\end{prop}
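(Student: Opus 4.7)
The plan is to build the desired bigrading on $\fr{n}\o \bb{C}^m$ by combining the given bigrading on $\fr{n}$ with an elementary bigrading on the abelian factor $\bb{C}^m$, taken from \cref{abelians}, and then verify condition $\mathbf{(W)}$ by a Künneth-style calculation.

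First I would decompose the abelian Lie algebra $\bb{C}^m$ into copies of the examples already treated: if $m$ is even, write $\bb{C}^m\cong (\bb{C}^2)^{\o m/2}$ and put on each $\bb{C}^2$ the bigrading $\ang{X}_{-1,0}\o\ang{\overline{X}}_{0,-1}$ from \cref{abelians}; if $m$ is odd, take one extra copy of $\bb{C}=\ang{X}_{-1,-1}$. This yields a bigrading $\bb{C}^m=\bo_{p,q}(\bb{C}^m)_{p,q}$ concentrated in degrees $(-1,0), (0,-1), (-1,-1)$, with $\overline{(\bb{C}^m)_{p,q}}=(\bb{C}^m)_{q,p}$. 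Then I would endow $\fr{n}\o\bb{C}^m$ with the direct sum bigrading $(\fr{n}\o\bb{C}^m)_{p,q}=\fr{n}_{p,q}\o (\bb{C}^m)_{p,q}$. Since the bracket on $\fr{n}\o\bb{C}^m$ is just the bracket of $\fr{n}$ extended trivially (the bracket vanishes on $\bb{C}^m$ and between the two summands), the compatibility $[(\fr{n}\o\bb{C}^m)_{p,q},(\fr{n}\o\bb{C}^m)_{s,t}]\subset (\fr{n}\o\bb{C}^m)_{p+s,q+t}$ reduces to the assumed compatibility on $\fr{n}$, and the reality condition is inherited summand-by-summand.

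Next I would check $\mathbf{(W)}$ via the Künneth decomposition for Chevalley--Eilenberg cohomology of a direct sum of Lie algebras:
\ali{
H^1(\fr{n}\o\bb{C}^m)&\cong H^1(\fr{n})\o (\bb{C}^m)^*,\\
H^2(\fr{n}\o\bb{C}^m)&\cong H^2(\fr{n})\o\bigl(H^1(\fr{n})\otimes (\bb{C}^m)^*\bigr)\o \bw^2(\bb{C}^m)^*.
}
The dual $(\bb{C}^m)^*$ is concentrated in degrees $(1,0),(0,1),(1,1)$, so combined with the hypothesis $H^1(\fr{n})=H^1_{1,0}\o H^1_{0,1}\o H^1_{1,1}$ we immediately get $\mathbf{(W)}$ for $H^1$. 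For $H^2$, the first summand satisfies $\mathbf{(W)}$ by hypothesis; for the second and third it suffices to observe that every pairwise sum of weights from $\{(1,0),(0,1),(1,1)\}$ lies in the allowed set $\{(2,0),(1,1),(0,2),(2,1),(1,2),(2,2)\}$, which is a finite check.

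There is essentially no obstacle: the only substantive content is confirming that the weights arising in $H^1(\fr{n})\otimes(\bb{C}^m)^*$ and in $\bw^2(\bb{C}^m)^*$ stay inside the allowed box, which is a short case analysis. What I would be most careful about is making sure the bracket-compatibility, the conjugation condition, and the identification of the induced bigrading on cohomology via Künneth are all stated and used consistently with the conventions fixed in the preceding subsection.
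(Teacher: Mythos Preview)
Your argument is correct, and the overall shape (direct-sum bigrading, then a K\"unneth computation of $H^1$ and $H^2$) matches the paper's proof. The one genuine difference is the choice of bigrading on the abelian factor: the paper simply places all of $\bb{C}^m$ in bidegree $(-1,-1)$, i.e.\ sets $(\fr{n}\o\bb{C}^m)_{-1,-1}=\fr{n}_{-1,-1}\o\bb{C}^m$ and leaves every other $\fr{n}_{p,q}$ unchanged, whereas you split $\bb{C}^m$ into conjugate pairs in bidegrees $(-1,0),(0,-1)$ plus possibly one copy in $(-1,-1)$. The paper's choice makes the verification of ${\bf (W)}$ shorter, since $(\bb{C}^m)^*$ then sits entirely in degree $(1,1)$ and every new class in $H^2$ lands automatically in $H^2_{2,1}\o H^2_{1,2}\o H^2_{2,2}$ with no case analysis; your version costs a small finite check of weight sums but produces a bigrading with potentially richer Hodge type on $H^1$, which is harmless here but could be useful if one later needs to match a prescribed Hodge structure on $H^1$. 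Either route proves the proposition.
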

\begin{proof}
Let $\fr{n}=\bo_{p,q\leq 0, p+q\leq -1}\fr{n}_{p,q}$ be the bigrading of a mixed Hodge structure satisfying the condition {\bf (W)}, and the induced grading $H^*(\fr{n})=\bo_{p,q\geq 0, p+q\geq 1}H_{p,q}^*$. We define $({\fr{n}\o\bb{C}^m})_{-1,-1}:=\fr{n}_{-1,-1}\o\bb{C}^m$ and $({\fr{n}\o\bb{C}^m})_{p,q}:=\fr{n}_{p,q}$ for other $i$. Since the differentiation of the dual basis for any basis of $\bb{C}^m$ are all zero, we have 
\ali{d(x\w y)=dx\w y}
for all $x\in \bw^{\bullet}\fr{n}^*$ and $y\in \bw^{\bullet}(\bb{C}^m)^*.$ Hence we have $dx=0$ if and only if $d(x\w y)=0$ for any $x\in \bw^{\bullet}\fr{n}^*$ and $y\in \bw^{\bullet}(\bb{C}^m)^*.$ Thus we see that 
\ali{
H^1(\fr{n}\o\bb{C}^m)=H_{1,0}^1\o H_{0,1}^1\o {H_{1,1}^1}'
}
\ali{H^2(\fr{n}_{\bb{C}})=H_{2.0}^2\o H_{1,1}^2\o H_{0,2}^2\o {H_{1,2}^2}'\o {H_{2,1}^2}'\o{ H_{2,2}^2}'
}
where ${H_{1,1}^1}'=H_{1,1}^2\o (\bb{C}^m)^*$, ${H_{1,2}^2}'=H_{1,2}^2\o\ang{[x\w y]\mid x\in (\fr{n}_{0,-1})^*,y\in (\bb{C}^m)^*},$ ${H_{2,1}^2}'=H_{2,1}^2\o\ang{[x\w y]\mid x\in (\fr{n}_{-1,0})^*,y\in (\bb{C}^m)^*}$ and ${H_{2,2}^2}'=H_{2,2}^2\o\ang{[y_i\w y_j]\mid y_i,y_j\in (\bb{C}^m)^*, 1\leq i<j\leq m}.$ Thus the grading satisfies the condition {\bf (W)}.   
\end{proof}
\section{Main theorem:the case of $b_1(X)\leq 3$}\label{9}
We denote by $\mathcal{M}$ the class of nilpotent Lie algebras over $\bb{R}$ such that its complexification admit bigradings of mixed Hodge structures in \cref{W}. 

Let $X$ be a smooth quasi-projective variety. We denote the first betti number of $X$ by $b_1(X)=\dim H^1(X,\bb{R})$. In this section, we show the following theorem:
\begin{thm}\label{b1<=3}
Let $X$ be a smooth quasi-projective variety. If $b_1(X)\leq 3$ and $\pi_1(X,x)$ is torsion-free nilpotent, then $\pi_1(X,x)$ is isomorphic to $\bb{Z}$,$\bb{Z}^2$,$\bb{Z}^3$, a lattice in $H_3(\bb{R})$ or $H_3(\bb{R})\times\bb{R}$ where $H_3(\bb{R})$ is $3$-dimensional Heisenberg group. In particular, if $b_1(X)\leq 3$, then $\pi_1(X,x)$ is abelian or $2$-step.
\end{thm}
Since $\Gamma =\pi_1(X,x)$ is torsion-free nilpotent, there uniquely exists a simply-connected nilpotent Lie group $N$ such that $\Gamma$ is a lattice in $N$ (\cite{Malcev},\cite{Raghu}). In this case, by \cite{Morgan}, The complex $(\bw^{\bullet}\fr{n}^*,d)$ of the Lie algebra $\fr{n}$ of $N$ is a $1$-minimal model of $X$ in the sense of Sullivan(\cite{Sul}). Thus $b_1(X)=\dim H^1(X,\bb{R})=\dim H^1(\fr{n})$. Thus it suffices to study the Lie algebra.
\subsection{The Case of Two Generators ($b_1(X)=2$)}
\begin{prop}\label{twogen}
Let $X$ and $\overline{X}$ be elements of $\fr{g}_{\bb{C}}$ such that their dual elements are  $x$ and $\overline{x}$ respectively. If $\fr{g}\in\mathcal{M}$ and $\dim H^1(\fr{g}_{\bb{C}}) = 2$, then $\fr{g}_{\bb{C}}$ is either isomorphic to $\bb{C}^2$ or the Heisenberg Lie algebra $\fr{n}_3(\bb{C})$. 
\end{prop}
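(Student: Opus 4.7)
The plan is to use condition $(\textbf{W})$ on $H^1$ to constrain how the two generators of $\fr{g}_{\bb{C}}$ sit in the bigrading, and then $(\textbf{W})$ on $H^2$ to rule out every iterated bracket beyond the single one producing $\fr{n}_3(\bb{C})$.

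First I would exploit Hodge symmetry and $(\textbf{W})$ on $H^1$: since $\dim H^1(\fr{g}_{\bb{C}}) = 2$ and only $H^1_{1,0}, H^1_{0,1}, H^1_{1,1}$ are permitted, there are exactly two cases --- (Case A) $\dim H^1_{1,0} = \dim H^1_{0,1} = 1$, or (Case B) $\dim H^1_{1,1} = 2$. As $\fr{g}$ is nilpotent, the Lie algebra generators of $\fr{g}_{\bb{C}}$ lift a basis of $\fr{g}_{\bb{C}}/[\fr{g}_{\bb{C}},\fr{g}_{\bb{C}}] \cong H^1(\fr{g}_{\bb{C}})^*$, which I would take as a conjugate pair $X, \overline{X}$: in Case A, $X \in \fr{g}_{-1,0}$ and $\overline{X} \in \fr{g}_{0,-1}$; in Case B, both $X, \overline{X} \in \fr{g}_{-1,-1}$.

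In Case A, I would note that $[X,X] = [\overline{X},\overline{X}] = 0$ forces every non-trivial iterated bracket to have bigrading $(-a,-b)$ with $a, b \geq 1$, so $Y := [X, \overline{X}] \in \fr{g}_{-1,-1}$ is the only degree-$2$ candidate; if $Y = 0$ then $\fr{g}_{\bb{C}} \cong \bb{C}^2$. For $Y \neq 0$ I would show by contradiction that $V_k := \operatorname{ad}_X^k(Y) \in \fr{g}_{-(k+1),-1}$ vanishes for every $k \geq 1$. Assuming there is a maximal $k \geq 1$ with $V_k \neq 0$, a bigrading count forces $dv_k = c\, x \wedge v_{k-1}$ with $c \neq 0$, whence $d(x \wedge v_k) = 0$; and since $\fr{g}_{-(k+2),-1}$ in a $2$-generator nilpotent Lie algebra is at most $1$-dimensional, spanned by $V_{k+1} = 0$, we get $\fr{g}^*_{k+2,1} = 0$, so $x \wedge v_k$ is not exact. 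Hence $[x \wedge v_k] \in H^2_{k+2,1}$ is non-zero with $k+2 \geq 3$, violating $(\textbf{W})$. By the symmetric argument $[\overline{X}, Y] = 0$, so $Y$ is central and $\fr{g}_{\bb{C}} \cong \fr{n}_3(\bb{C})$.

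In Case B the bigrading is diagonal, $\fr{g}_{\bb{C}} = \bo_{k \geq 1} \fr{g}_{-k,-k}$, and I would carry out a parallel argument. If $Y := [X, \overline{X}] = 0$ then $\fr{g}_{\bb{C}} \cong \bb{C}^2$; otherwise the $2$-form $x \wedge y$ of bigrading $(3,3)$ is closed by a direct check, and in the $2$-step case is not exact because $\fr{g}^*_{3,3} = 0$, producing a non-zero class in the forbidden bigrading $(3,3)$; for step $s \geq 3$ the same idea applied to a top-step element $W \in \fr{g}_{-s,-s}$ (combining $x \wedge w$ and $\overline{x} \wedge w$ appropriately to ensure closedness) yields a non-zero class of bigrading $(s+1, s+1)$. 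Either way $(\textbf{W})$ fails, so $Y = 0$ and $\fr{g}_{\bb{C}} \cong \bb{C}^2$. The main technical obstacle I expect is the structural claim backing the non-exactness step: namely that in a $2$-generator nilpotent Lie algebra the bigraded piece $\fr{g}_{-(k+2),-1}$ is spanned by the right-nested bracket $\operatorname{ad}_X^{k+2}(\overline{X})$. This should follow from the free Lie algebra computation (via the Lyndon word basis, where $X^{k+2}\overline{X}$ is the unique Lyndon word of this type), together with the observation that a Lie algebra quotient can only shrink this one-dimensional space.
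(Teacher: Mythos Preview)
Your overall strategy---split into the two $H^1$ configurations and use condition \textbf{(W)} on $H^2$ to forbid long bracket chains---is the same as the paper's. The paper, however, runs a uniform \emph{forward} induction: assuming $\dim\fr{g}_{\bb{C}}>3$ it builds nonzero elements $z_2,z_3,\dots$ with $dz_{p+1}=x\wedge z_p$, using \textbf{(W)} at each step to force exactness of the closed form $x\wedge z_p$ in bidegree $(p+1,p+1)$ (Case~B) or $(p+1,1)$ (Case~A). This yields infinitely many nonzero bigraded pieces, a contradiction. Your Case~A is precisely the contrapositive of this, and is correct; the structural fact that $\fr{g}_{-(k+2),-1}$ is spanned by $\operatorname{ad}_X^{k+2}(\overline{X})$ is genuinely needed in your direction (and your Lyndon justification is fine), whereas the paper's forward induction never appeals to it.

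Your Case~B, however, has a real gap once the step length is $s\ge 4$. For a top-step $W\in\fr{g}_{-s,-s}$ one has $dw\in C^2_{s,s}=\bigoplus_{a+b=s}\fr{g}_a^*\wedge\fr{g}_b^*$, and as soon as $dw$ has a nonzero component in some $\fr{g}_a^*\wedge\fr{g}_b^*$ with $a,b\ge 2$ (which is unavoidable for $s\ge 5$, and not excluded for $s=4$ either), wedging with any $\alpha\in\fr{g}_1^*=\langle x,\overline{x}\rangle$ cannot kill that component. So no linear combination of $x\wedge w$ and $\overline{x}\wedge w$ will be closed, and your ``appropriate combination'' does not exist. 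Your argument does go through for $s=2$ and $s=3$ (there $dw\in\fr{g}_1^*\wedge\fr{g}_{s-1}^*$ and one can factor $dw=(\alpha x+\beta\overline{x})\wedge(\cdot)$), but not beyond. The fix is exactly the paper's device: instead of an \emph{arbitrary} top-step $w$, construct $z_s$ inductively so that $dz_s=x\wedge z_{s-1}$; then $x\wedge z_s$ is automatically closed, lies in $C^2_{s+1,s+1}$, and is non-exact since $\fr{g}_{-(s+1),-(s+1)}=0$.
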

\begin{proof}
Let $\fr{g}\in\mathcal{M}$ and $\fr{g}_{\bb{C}}=\bo_{p,q\leq 0, p+q\leq -1}\fr{g}_{p,q}$. We assume that $\dim H^1(\fr{g}_{\bb{C}})=2$ and $\dim \fr{g}_{\bb{C}}>3$. Since $\fr{g}\in\mathcal{M}$, the induced gradings is either
\ali{
H^1(\fr{g})=H_{1,0}^1\o H_{0,1}^1=\ang{[x]}_{1,0}\o\ang{[\overline{x}]}_{0,1}
}
or
\ali{
H^1(\fr{g})=H_{1,1}^1=\ang{[x],[\overline{x}]}_{1,1}.
}
Assume that $\fr{g}_{-1,-1}=\ang{X,\overline{X}}_{-1,-1} (\text{resp. }\fr{g}_{-1,0}=\ang{X}_{-1,0}\text{ and }\fr{g}_{0,-1}=\ang{\overline{X}}_{0,-1})$ For all integer $p\geq 2$, there exists an element $z_p\in \fr{g}_{-p,-p}($ resp. $\fr{g}_{-1,-p}$ or $\fr{g}_{-p,-1})$ such that
\ali{
dz_p=x\w z_{p-1}
}
or
\ali{
dz_p=\overline{x}\w z_{p-1}.
}
We prove by induction for $p\geq 2$. Since $\dim\fr{g}>3$, $Z_2=[X,\overline{X}]\neq 0$. Let $x,\overline{x}$ and $z_2$ be the dual element of $X,\overline{x}$ and $Z_2$. Let $z_1:=-\overline{x}$. Thus we have
\ali{
dz_2=-x\w \overline{x}=x\w z_1
.}
Assume that there exists $z_p\in \fr{g}_{-p,-p}^*$ such that $dz_p=x\w z_{p-1}$. Since $d(x\w z_p)=0, x\w z_p\in C_{p+1,p+1}^2, p+1\geq 3\text{ and }\fr{g}\in\mathcal{M}$,
there exists $z_{p+1}\in \fr{g}_{-(p+1),-(p+1)}^*$ such that 
\ali{
dz_{p+1}=x\w z_p
.}
Therefore the lemma holds. When $\fr{g}_{-1,0}=\ang{X}_{-1,0}\text{ and }\fr{g}_{0,-1}=\ang{\overline{X}}_{0,-1}$, we can show by the same argument.
Thus $\fr{g}\notin\mathcal{M}$ if $\dim\fr{g}>3$. Therefore $\dim\fr{g}\leq 3$ and $\fr{g}_{\bb{C}}$ is either isomorphic to $\bb{C}^2$ or $\fr{n}_3(\bb{C})$.
\end{proof}
Since $\dim H^1(\fr{n})=2$ if $\fr{n}$ is filiform, we obtain the following.
\begin{cor}[\cite{grading}]
Let $\Gamma$ be a lattice in a simply-connected nilpotent Lie group $N$. and $\fr{n}$ the Lie algebra of $N$. Assume that $\fr{g}$ is {\it filiform}. $\Gamma$ is isomorphic to the fundamental group of a smooth quasi-projective variety if and only if $\dim \fr{n}=2$ or $3$.
\end{cor}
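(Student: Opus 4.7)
The plan is to combine the just-proved \cref{twogen} with the elementary observation that a filiform Lie algebra has $\dim H^1=2$, and then to supply examples for the converse. Recall that for $\fr{n}$ filiform of dimension $n$, one has $\dim \fr{n}/[\fr{n},\fr{n}]=2$ by the very definition of filiform, hence $\dim H^1(\fr{n})=\dim (\fr{n}/[\fr{n},\fr{n}])^{*}=2$. This single observation is the bridge that makes \cref{twogen} directly applicable.

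For the forward direction, suppose $\Gamma\cong\pi_1(X,x)$ for some smooth complex algebraic variety $X$. By Malcev's theorem, $\Gamma$ determines the real Lie algebra $\fr{n}$, and by Morgan's \cref{W} the complexification $\fr{n}_{\bb{C}}$ admits a bigrading of a mixed Hodge structure satisfying condition \textbf{(W)}, i.e.\ $\fr{n}\in\mathcal{M}$. Combining this with $\dim H^1(\fr{n}_{\bb{C}})=2$ coming from the filiform hypothesis, \cref{twogen} applies and forces $\fr{n}_{\bb{C}}\cong\bb{C}^2$ or $\fr{n}_{\bb{C}}\cong\fr{n}_3(\bb{C})$, whence $\dim \fr{n}\in\{ 2,3\}$.

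For the converse, I would exhibit explicit realizations in each of the two allowed dimensions. If $\dim \fr{n}=2$ then $\fr{n}$ is abelian, $\Gamma\cong\bb{Z}^2$, and $\Gamma$ is the fundamental group of any elliptic curve $E=\bb{C}/\Lambda$, which is a smooth projective variety. If $\dim \fr{n}=3$ then $\fr{n}\cong\fr{n}_3(\bb{R})$ is the Heisenberg Lie algebra, and it is classical that every lattice in $H_3(\bb{R})$ occurs as $\pi_1$ of a smooth (quasi-)projective variety; I would simply quote one such realization from the list that is already invoked in \cref{b1<=3} (cf.\ \cite{Cam}, \cite{JohVen}, \cite{CadDengYam}).

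The main obstacle is not the forward direction, which is essentially a one-line consequence of \cref{twogen} once one notices the $H^1$-dimension identity. The delicate point is the converse: one must point to an explicit construction of a smooth quasi-projective variety whose fundamental group is a prescribed lattice in $H_3(\bb{R})$. Since the same realization is already used (and implicitly cited) in the statement of \cref{b1<=3}, the converse reduces to invoking that construction, after which both implications are proved.
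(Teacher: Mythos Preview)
Your forward direction is exactly the paper's argument: the paper simply prefaces the corollary with the sentence ``Since $\dim H^1(\fr{n})=2$ if $\fr{n}$ is filiform, we obtain the following,'' and leaves the rest to \cref{twogen} and the citation \cite{grading}. Your write-up of this implication is correct and matches the paper line for line.

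For the converse the paper gives no argument at all, deferring entirely to \cite{grading}; your added discussion (elliptic curve for $\bb{Z}^2$, a known quasi-projective realization for lattices in $H_3(\bb{R})$) is correct in spirit, though note that \cref{b1<=3} is a restriction theorem and does not itself \emph{construct} such varieties---the realizations you want are in the references \cite{Cam}, \cite{JohVen}, \cite{CadDengYam}, not in \cref{b1<=3}. With that minor adjustment of citation, your proposal is correct and coincides with the paper's approach.
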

\subsection{The Case of Three Generators ($b_1(X)=3$)}
\begin{prop}\label{threegen}
Let $\fr{g}\in\mathcal{M}$ and $\fr{g}_{\bb{C}}=\bo_{p,q\leq 0, p+q\leq -1}\fr{g}_{p,q}\in \mathcal{M}$. If $\dim H^1(\fr{g}_{\bb{C}})=3$, then $\fr{g}_{\bb{C}}$ is isomorphic either to $\bb{C}^3$ or $\fr{n}(\bb{C})\o \bb{C}$. 
\end{prop}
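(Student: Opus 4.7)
My plan is to parallel the proof of Proposition \ref{twogen}, using the induced bigrading on $H^*(\fr{g}_{\bb{C}})$ together with condition {\bf (W)} to drive an inductive ``tower'' argument that forces $\fr{g}_{\bb{C}}$ to be $\bb{C}^3$ or $\fr{n}_3(\bb{C})\oplus\bb{C}$.

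First, by {\bf (W)} only the types $(1,0)$, $(0,1)$, $(1,1)$ can occur in $H^1$, and complex conjugation gives $\dim H^1_{1,0}=\dim H^1_{0,1}$. Since these dimensions sum to $3$, there are exactly two cases: (Case A) $(h^{1,0},h^{0,1},h^{1,1})=(0,0,3)$, with generators $X_1,X_2,X_3\in\fr{g}_{-1,-1}$; or (Case B) $(h^{1,0},h^{0,1},h^{1,1})=(1,1,1)$, with generators $X\in\fr{g}_{-1,0}$, $\overline{X}\in\fr{g}_{0,-1}$, and $Y\in\fr{g}_{-1,-1}$. In either case the dual $1$-cochains $x_i$ (or $x,\overline{x},y$) are closed.

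In Case A, if every $[X_i,X_j]$ vanishes then $\fr{g}_{\bb{C}}\cong\bb{C}^3$. Otherwise, up to relabeling set $Z:=[X_1,X_2]\neq 0$, with dual $z\in\fr{g}^*_{2,2}$ and $dz=-x_1\wedge x_2$. Putting $z_1:=x_2$ and $z_2:=-z$, so that $dz_2=x_1\wedge z_1$, an induction shows that $x_1\wedge z_p\in C^2_{p+1,p+1}$ is closed for each $p\geq 2$ because $d(x_1\wedge z_p)=-x_1\wedge dz_p=-x_1\wedge(x_1\wedge z_{p-1})=0$. Since types $(p+1,p+1)$ with $p\geq 2$ are forbidden in $H^2$ by {\bf (W)}, there must exist $z_{p+1}\in\fr{g}^*_{p+1,p+1}$ with $dz_{p+1}=x_1\wedge z_p$. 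The resulting infinite family of independent cochains contradicts finite-dimensionality of $\fr{g}_{\bb{C}}$, so Case A only delivers $\bb{C}^3$.

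In Case B, since $[y]\neq 0$ in $H^1$, $Y\notin[\fr{g},\fr{g}]$, and in particular $[X,\overline{X}]$ has no $Y$-component. I rule out each bracket that could create ``non-$\fr{n}_3$'' structure by the same tower template: if $[X,Y]=W\neq 0$, then $W\in\fr{g}_{-2,-1}$ with $dw=-x\wedge y$, so $x\wedge w\in C^2_{3,1}$ is closed, but $H^2_{3,1}=0$ by {\bf (W)}, forcing a new $W_1\in\fr{g}_{-3,-1}$ with $dw_1=x\wedge w$, and so on, yielding infinitely many $w_p\in\fr{g}^*_{p,1}$---impossible. Symmetrically $[\overline{X},Y]=0$, so $Y$ is central. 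Setting $Z:=[X,\overline{X}]$, either $Z=0$ and $\fr{g}_{\bb{C}}\cong\bb{C}^3$, or $Z\neq 0$; in the latter case, analogous towers starting from $[X,Z]\in\fr{g}_{-2,-1}$, $[\overline{X},Z]\in\fr{g}_{-1,-2}$, and $[Y,Z]\in\fr{g}_{-2,-2}$ rule out every $3$-step extension. What remains is $\fr{g}_{\bb{C}}=\langle X,\overline{X},Z\rangle\oplus\langle Y\rangle\cong\fr{n}_3(\bb{C})\oplus\bb{C}$.

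The main bookkeeping obstacle is, at each step of each tower, checking that the relevant type-graded piece of $C^2$ is one-dimensional and contains only the expected term, so that closedness of $x_1\wedge z_p$ (or $x\wedge w_p$) follows cleanly and each new $z_{p+1}$ is indeed an honest additional element of $\fr{g}_{\bb{C}}$. This is where the earlier exclusions---no spurious brackets producing extra cochains of the same bigrading---pay off, exactly as in the proof of Proposition \ref{twogen}.
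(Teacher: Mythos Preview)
Your proof is essentially correct and follows the same strategy as the paper: both arguments split into the two cases $(h^{1,0},h^{0,1},h^{1,1})=(0,0,3)$ and $(1,1,1)$ forced by Hodge symmetry, and in each case use an inductive tower of cochains $z_p$ with $dz_{p+1}=x\wedge z_p$ to show that any bracket beyond the Heisenberg one would force $\dim\fr{g}_{\bb{C}}=\infty$. The paper phrases this as ``assume $\dim\fr{g}_{\bb{C}}>4$ and derive a contradiction,'' while you instead kill each offending bracket one at a time, but the mechanism is identical; your closing remark about bookkeeping the type-graded pieces of $C^2$ is exactly the care the paper leaves implicit.
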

\begin{proof}
Let $\fr{g}\in\mathcal{M}$ and $\fr{g}_{\bb{C}}=\bo_{p,q\leq 0, p+q\leq -1}\fr{g}_{p,q}\in \mathcal{M}$. We assume that $\dim H^1(\fr{g}_{\bb{C}})=3$ and $\dim \fr{g}_{\bb{C}}>4$. Since $\fr{g}\in\mathcal{M}$, the induced gradings is either
\ali{
H^1(\fr{g})=H_{1,0}^1\o H_{0,1}^1\o H_{1,1}^1=\ang{[x]}_{1,0}\o\ang{[\overline{x}]}_{0,1}\o\ang{[y]}_{1,1}
}
or
\ali{
H^1(\fr{g})=H_{1,1}^1=\ang{[x],[y],[z]}_{1,1}\text{ such that }\overline{H_{1,1}^1}=H_{1,1}^1
.}
Let $\fr{g}_{-1,0}=\ang{X}_{-1,0},\fr{g}_{0,-1}=\ang{\overline{X}}_{0,-1}\text{ and }\fr{g}_{-1,-1}=\ang{Y}_{-1,-1}$ such that $x,\overline{x}$ and $y$ are the dual elements. We first assume that $Z_1:=[X,\overline{X}]\neq 0$. Since $\dim\fr{g}>4$, we have 
\ali{
Z_2:=[X,Z_1]\neq 0\ ([\overline{X},Z_1]\neq 0)
}
or
\ali{
Z_3:=[X,Y]\neq 0\ ([\overline{X},Y]\neq 0).
}
If $Z_2\neq 0$, then we can show that $\fr{g}_{-p,-1}\neq \0$ for all $p\geq 2$, and hence $\dim\fr{g}=\infty$. Thus $Z_2=0$ and $Z_3\neq 0$. As the case $Z_2\neq 0$, we can show that $\fr{g}_{-p,-1}\neq \0$ for all $p\geq 2$, and hence $\fr{g}\notin\mathcal{M}$. We assume that $\fr{g}_{-1,-1}=\ang{X,Y,Z}_{-1,-1}$ such that $\overline{\fr{g}_{-1,-1}}=\fr{g}_{-1,-1}$. Since $\dim\fr{g}>4$, we have $\fr{g}_{-2,-2}\neq \0$. We can easily show that $\fr{g}_{-p,-p}\neq \0$ for all $p\geq 3$ if $\dim\fr{g}_{-2,-2}=1$ or $2$. We assume that $\dim\fr{g}_{-2,-2}=1$. We may assume that $Z_1:=[X,Y]$ is a basis of $\fr{g}_{-2,-2}$. Since $\dim C_{3,3}^3=\dim\ang{x\w y\w z}=1$, we have 
\ali{
\dim\Ker (d: C_{3,3}^2\rightarrow C_{3,3}^3)=2,
}
and hence $\dim\fr{g}_{-3,-3}=2$. In addition, we can show that $\fr{g}_{-p,-p}\neq \0$ for all $p\geq 3$ by induction. Thus $\fr{g}\notin\mathcal{M}$ and $\dim\fr{g}\leq 4$. Therefore $\fr{g}_{\bb{C}}$ is either isomorphic to $\bb{C}^3$ or $\fr{n}_3(\bb{C})\o\bb{C}$. 
\end{proof}
\section{Main theorem:The case of $rk(\pi_1(X,x))\leq 7$}
In this section, we show the following theorem.
\begin{thm}\label{rk<=7}
Let $X$ be a smooth quasi-projective variety such that $\pi_1(X,x)$ is torsion-free nilpotent and $rk(\pi_1(X,x))\leq 7$. Then $\pi_1(X,x)$ is abelian or $2$-step. 
\end{thm}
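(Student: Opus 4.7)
The plan is to reduce, via Malcev's rigidity and Morgan's \cref{W}, to a purely Lie-algebraic classification and then split by rank. Given torsion-free nilpotent $\Gamma=\pi_1(X,x)$, there is a unique simply-connected nilpotent Lie group $N$ in which $\Gamma$ is a lattice, and $rk(\Gamma)=\dim\fr{n}$ as a real vector space. Morgan's theorem then supplies a bigrading of a mixed Hodge structure on $\fr{n}_{\bb{C}}$ satisfying condition {\bf (W)}, {\it i.e.}\ $\fr{n}\in\mathcal{M}$. Hence the theorem reduces to showing that every $\fr{n}\in\mathcal{M}$ with $\dim\fr{n}\leq 7$ is abelian or $2$-step nilpotent.

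I would split the remaining work according to whether $\dim\fr{n}\leq 6$ or $\dim\fr{n}=7$; these are the subsequent \cref{rk<=6} and \cref{rk=7}, whose conclusions combine directly to give the theorem. For $\dim\fr{n}\leq 6$, the argument extends the propagation technique of \cref{twogen} and \cref{threegen}: for each value of $b_1\in\{1,\dots ,6\}$ I would enumerate the possible bigradings of $H^1$ allowed by {\bf (W)} together with the corresponding shapes of $\fr{n}_{-1,0}$, $\fr{n}_{0,-1}$, $\fr{n}_{-1,-1}$, and use the fact that any closed $2$-cochain whose bi-weight violates the constraint in {\bf (W)} must be a $d$-coboundary; this inductively forces nonzero graded pieces $\fr{n}_{-p,-q}$ in ever deeper weights, so the bound $\dim\fr{n}\leq 6$ eliminates all candidates except the abelian algebras and Heisenberg-type products listed in \cref{rk<=6}.

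For $\dim\fr{n}=7$ the same strategy applies, but the combinatorics becomes substantially heavier: for each $b_1\in\{4,5,6,7\}$ I would go through the candidate $7$-dimensional real nilpotent Lie algebras (e.g.\ via Gong's classification) and determine which admit a bigrading compatible with {\bf (W)}. The positive cases require exhibiting explicit bigradings that extend the constructions of \cref{Heisenberg} and \cref{trivialextension}; the negative cases require ruling out all possible bigradings, typically by showing that some closed $2$-cochain of a forbidden bi-weight $(p,q)$ cannot be a $d$-image of any $1$-cochain of matching bi-weight.

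The main obstacle will be the case $\dim\fr{n}=7$, $b_1=4$: the list of candidate $3$-step and filiform $7$-dimensional Lie algebras in this regime is long and each must be analyzed individually. Pinning down exactly $\fr{n}_7^{142}$, $\fr{n}_7^{143}$, $\fr{n}_7^{144}$---all of which turn out to be $2$-step---as the only survivors of {\bf (W)} is the technical core of the proof, and it will require explicit computations of $d$ on $\bw^2\fr{n}_{\bb{C}}^*$ in every bi-weight $(p,q)$ with $p\geq 3$ or $q\geq 3$.
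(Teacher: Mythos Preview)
Your overall reduction (Malcev, Morgan, split into $\dim\fr{n}\leq 6$ and $\dim\fr{n}=7$, cite \cref{rk<=6} and \cref{rk=7}) matches the paper exactly, and your propagation idea for $\dim\fr{n}\leq 6$ is precisely what the paper does. The genuine divergence is in how the $\dim\fr{n}=7$ case is handled.

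The paper does \emph{not} run through the list of $7$-dimensional nilpotent Lie algebras and test {\bf (W)} on each. Instead it extracts from the propagation argument a single structural statement, \cref{geq8}: if $\fr{g}\in\mathcal{M}$ is $3$-step with $\dim H^1(\fr{g}_{\bb{C}})=4$, then $\dim\fr{g}_{\bb{C}}\geq 8$. Combined with the trivial lower bound $\dim\fr{g}\geq b_1+(s-1)$ for an $s$-step algebra, this immediately forces step${}\leq 2$ in all cases $b_1\in\{4,5,6,7\}$, $\dim\leq 7$. Only \emph{after} establishing $2$-step does the paper invoke a classification, and then only of $2$-step algebras, which is very short. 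Your plan to sift through the full $7$-dimensional list (including $3$-step candidates) would work in principle but is substantially heavier; the paper's route shows that the same propagation mechanism you use for $\dim\leq 6$, once packaged as the lower bound \cref{geq8}, already disposes of the entire $\geq 3$-step locus in dimension $7$ without any case-by-case work.

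One small slip: filiform Lie algebras have $b_1=2$, so they never appear in your ``$\dim\fr{n}=7$, $b_1=4$'' case; that part of your anticipated obstacle is empty.
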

\subsection{The proof of \cref{rk<=7}}
We first show the following lemma. The following is the case of $b_1(X)=4$ and $3$-step.
\begin{lem}\label{geq8}
Let $N$ be a a simply-connected nilpotent Lie group such that $\pi_1(X,x)$ is isomorphic to a lattice in $N$. Let $\fr{g}_{\bb{C}}$ be the complexification of the Lie algebra of $N$. If $b_1(X)=4$ and $\fr{g}_{\bb{C}}$ is $3$-step nilpotent, then $\dim\fr{g}_{\bb{C}}\geq 8$.
\end{lem}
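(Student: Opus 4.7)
The plan is to exploit the bigrading of the mixed Hodge structure on $\fr{n}_{\bb{C}}$ together with the (W)-vanishing of $H^2_{p,q}$ outside the allowed bidegrees $\{(2,0),(1,1),(0,2),(2,1),(1,2),(2,2)\}$, to force enough generators in $\fr{n}_{\bb{C}}$ under the $3$-step assumption. Since $\dim H^1(\fr{n}_{\bb{C}}) = b_1(X) = 4$ and $\dim H^1_{1,0} = \dim H^1_{0,1}$ by conjugation, (W) on $H^1$ yields three cases for the triple $(\dim H^1_{1,0}, \dim H^1_{0,1}, \dim H^1_{1,1})$: (i) $(2,2,0)$, (ii) $(1,1,2)$, and (iii) $(0,0,4)$. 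I would place the generators in the corresponding pieces $\fr{n}_{-1,0}, \fr{n}_{0,-1}, \fr{n}_{-1,-1}$ and track the bigrading of all brackets.

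For case (i), with $X_1, X_2 \in \fr{n}_{-1,0}$ and conjugates in $\fr{n}_{0,-1}$, I would distinguish two subcases. If $[X_1, X_2] \neq 0$, then $\fr{n}_{-2,0}\neq 0$ and $C^2_{3,0} = \fr{n}^*_{-1,0} \wedge \fr{n}^*_{-2,0}$ is $2$-dimensional and consists entirely of cocycles (since $dx_i = 0$ and $dw$ has pure bidegree $(2,0)$); the required $H^2_{3,0} = 0$ forces $\dim \fr{n}_{-3,0} = 2$, and by conjugation $\dim \fr{n}_{0,-3} = 2$, yielding $\dim \fr{n}_{\bb{C}} \geq 10$. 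If instead $[X_1, X_2] = [\bar{X}_1, \bar{X}_2] = 0$, then only $\fr{n}_{-1,-1}$ carries brackets and $3$-step forces $\fr{n}_{-2,-1} \neq 0$ (with conjugate $\fr{n}_{-1,-2}\neq 0$); the vanishing $H^2_{3,1} = 0$, combined with $\fr{n}_{-3,-1} = 0$ (by $3$-step), requires every cocycle in $C^2_{3,1}$ to vanish. A direct computation of $d$ applied to $x_i \wedge \eta$ (where $\eta \in \fr{n}^*_{-2,-1}$) shows this forces $\dim \fr{n}_{-1,-1} \geq 2$, giving $\dim \fr{n}_{\bb{C}} \geq 4 + 2 + 1 + 1 = 8$.

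For case (iii), with all four generators $X_1, \dots, X_4 \in \fr{n}_{-1,-1}$ real, any nonzero $W \in \fr{n}_{-3,-3}$ produces elements $x_i \wedge w \in C^2_{4,4}$, and the vanishing $H^2_{4,4} = 0$ (with no compensating contribution from $\fr{n}_{-4,-4}$ in a $3$-step algebra) forces $d|_{C^2_{4,4}}$ to be injective, a strong constraint that, combined with $H^2_{3,3} = 0$, rules out dimensions below $8$. Case (ii), with mixed generators in $\fr{n}_{-1,0}, \fr{n}_{0,-1}, \fr{n}_{-1,-1}$, combines techniques of (i) and (iii): one uses the vanishings of $H^2_{3,1}, H^2_{1,3}, H^2_{3,2}, H^2_{2,3}$, and related bidegrees to derive dimension lower bounds on the various $\fr{n}_{p,q}$.

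The main obstacle is the detailed bookkeeping in case (ii) and the second subcase of (i), where multiple triple-bracket types coexist and Jacobi identities intertwine with the bigrading constraints; a careful tracking of cocycles in $C^2_{p,q}$ and their possible annihilation by $d$-images from higher-weight components is needed. The key structural principle throughout is that the conjugation symmetry $\overline{\fr{n}_{-p,-q}} \equiv \fr{n}_{-q,-p}$ forces non-self-conjugate bidegree pairs to contribute in units of real dimension $2$, which, combined with the (W)-forced minimum dimensions of the bigraded pieces, always pushes the total above $7$.
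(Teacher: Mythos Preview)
Your proposal is correct and follows essentially the same strategy as the paper: split into the three cases $(\dim H^1_{1,0},\dim H^1_{0,1},\dim H^1_{1,1})\in\{(2,2,0),(1,1,2),(0,0,4)\}$, and in each case use the {\bf (W)}-vanishing of $H^2$ in the forbidden bidegrees to force enough nonzero bigraded pieces that $\dim\fr{g}_{\bb{C}}\geq 8$. The only tactical differences are that the paper disposes of the subcase $[X_1,X_2]\neq 0$ in case~(i) by invoking the infinite-chain argument of \cref{twogen} (rather than your direct count via $H^2_{3,0}=0$), and in case~(iii) the paper works with $H^2_{3,3}$ and a kernel-dimension estimate where you invoke $H^2_{4,4}$; both routes lead to the same conclusion.
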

\begin{proof}
We define $(x,y,z):=(\dim H_{1,0}^1,\dim H_{0,1}^1,\dim H_{1,1}^1)$. Assume that $\dim \fr{g}\leq 7$. 

\underline{$(1):$ $(x,y,z)=(2,2,0)$}

Let $\fr{g}_{-1,0}=\ang{X_1,X_2}\text{ and }\fr{g}_{0,-1}=\ang{\overline{X_1},\overline{X_2}}$.
Then we can show that $[X_1,X_2]=0=[\overline{X_1},\overline{X_2}]$ by the same argument as in \cref{twogen}. Assume that $\dim \fr{g}_{-1,-1}=2$. Let $\fr{g}_{-1,-1}=\ang{Z_1,\overline{Z_1}}\subset C^1\fr{g}_{\bb{C}}$. Since $\fr{g}_{\bb{C}}$ is $3$-step and $[Z_1,Z_2]\in C^3\fr{g}=\0$, we have $[X_i,Z_j]\neq 0$ for some $i,j$. Thus $\dim \fr{g}_{-2,-1}=\dim\fr{g}_{-1,-2}\neq 0$. This shows that $\dim\fr{g}\geq 8$. Thus $\dim\fr{g}_{-1,-1}=1$. Let $\fr{g}_{-1,-1}=\ang{Z_1}\subset C^1\fr{g}_{\bb{C}}$ such that $\overline{\fr{g}_{-1,-1}}=\fr{g}_{-1,-1}$. Since $\fr{g}_{\bb{C}}$ is $3$-step, we have $[X_i,Z_1],[\overline{X_j},Z_1]\neq 0$ for some $i,j$. Furthermore $\dim\fr{g}\leq 7$, we have $\dim\fr{g}_{-2,-1}=\dim\fr{g}_{-1,-2}=1$. Let $\fr{g}_{-2,-1}=\ang{Z_2}$ and $\fr{g}_{-1,-2}=\ang{Z_3}$. Let $x_1,x_2,\overline{x_1},\overline{x_2},z_1,z_2$ and $z_3$ be the dual basis of $X_1,\dots ,Z_3$. We write
\ali{
dz_2=a_1x_1\w z_1+a_2x_2\w z_1=(a_1x_1+a_2x_2)\w z_1\text{ and }(a_1,a_2)\neq (0,0). 
}
We set $x:=a_1x_1+a_2x_2\neq 0$. Then $d(x\w z_2)=0$ and $x\w z_2\in C_{3,1}^2\cap\Ker d$. Since $\fr{g}\in\mathcal{M}$, there exists $0\neq f\in \fr{g}_{-3,-1}^*$ such that $df=x\w z_2$. This contradicts that $\dim\fr{g}\leq 7$. Thus we have $(x,y,z)=(1,1,2)$ or $(0,0,4)$. 

\underline{$(2):$ $(x,y,z)=(1,1,2)$} 

We can show that the existence of non-zero elements in $H_{3,1}^2$ and $H_{1,3}^2$ by the same argument as in the case that $(x,y,z)=(2,2,0)$.

\underline{$(3):$ $(x,y,z)=(0,0,4)$}

We can show that the existence of non-zero elements in $H_{3,3}^2$ when $\dim\fr{g}_{-2,-2}=1$ by the same argument as in the case that $(x,y,z)=(2,2,0)$. Thus $\dim\fr{g}_{-2,-2}=2$. Let $\fr{g}_{-2,-2}=\ang{Z_1,Z_2}$. Since $C_{3,3}^2=\ang{x_i\w z_j\mid 1\leq i\leq 4,j=1,2}$ and $C_{3,3}^3=\ang{x_i\w x_j\w x_k\mid 1\leq i<j<k\leq 4}$, we have
\ali{
\dim\Ker (d:C_{3,3}^2\rightarrow C_{3,3}^3)&=\dim C_{3,3}^2-\dim \Im (d:C_{3,3}^2\rightarrow C_{3,3}^3)\\
&\geq \dim C_{3,3}^2-\dim C_{3,3}^3=8-4=4
.}
Since $\fr{g}\in\mathcal{M}$, we have $\dim \fr{g}_{-3,-3}\geq 4$. This shows that $\dim\fr{g}_{\bb{C}}\geq 10$. Therefore $\dim\fr{g}_{\bb{C}}\geq 8$. 
\end{proof}
We second show that the following. The following is the case of $rk(\pi_1(X,x))\leq 6$.
\begin{thm}\label{rk<=6}
Let $X$ be a smooth quasi-projective variety. If $\pi_1(X,x)$ is torsion-free nilpotent and $rk(\pi_1(X,x))\leq 6$, then $\pi_1(X,x)$ is a lattice in one of the following simply-connected nilpotent Lie groups:
\ali{
\bullet\ &\bb{R}&\text{ if }b_1(X)=1,\\
\bullet\ &\bb{R}^2\text{ or }H_3(\bb{R})&\text{ if }b_1(X)=2,\\
\bullet\ &\bb{R}^3\text{ or }\bb{R}\times H_3(\bb{R})&\text{ if }b_1(X)=3,\\
\bullet\ &\bb{R}^4, \bb{R}^2\times H_3(\bb{R}), H_3(\bb{R})\times H_3(\bb{R})\text{ or }H_5(\bb{R})&\text{ if }b_1(X)=4,\\
\bullet\ &\bb{R}^5, \bb{R}^3\times H_3(\bb{R})\text{ or }\bb{R}\times H_5(\bb{R})&\text{ if }b_1(X)=5,\\
\bullet\ &\bb{R}^6&\text{ if }b_1(X)=6 
}  
where $H_{2k+1}(\bb{R})$ is $(2k+1)$-dimensional Heisenberg group.
\end{thm}
\begin{proof}
By \cref{twogen} and \cref{threegen}, we see that \cref{rk<=6} holds if $b_1(X)\leq 3$. If $b_1(X)=\dim H^1(\fr{g}_{\bb{C}})\geq 4$ and the step length of $\fr{g}_{\bb{C}}$ is greater than or equal to $4$, then we have 
\ali{
6&\geq rk(\pi_1(X,x))\\
&= \dim \fr{g}_{\bb{C}}\\
&=\dim\bo_{i\geq 1}C^i\fr{g}_{\bb{C}}/C^{i+1}\fr{g}_{\bb{C}}\\
&=\dim C^1 \fr{g}_{\bb{C}}/C^{2}\fr{g}_{\bb{C}}+\dim C^2\fr{g}_{\bb{C}}/C^{3}\fr{g}_{\bb{C}}+\dim C^3\fr{g}_{\bb{C}}/C^{4}\fr{g}_{\bb{C}}+\dim C^4\fr{g}_{\bb{C}}/C^{5}\fr{g}_{\bb{C}}+\cdots\\
&=\dim H^1(\fr{g}_{\bb{C}})+\dim C^2\fr{g}_{\bb{C}}/C^{3}\fr{g}_{\bb{C}}+\dim C^3\fr{g}_{\bb{C}}/C^{4}\fr{g}_{\bb{C}}+\dim C^4\fr{g}_{\bb{C}}/C^{5}\fr{g}_{\bb{C}}+\cdots\\
&\geq 4+1+1+1+\cdots\geq 7.
}
This shows that if $b_1(X)\geq 4$, then the step length of $\fr{g}_{\bb{C}}$ is $1$ (abelian) or $2$ or $3$. We consider the case of $b_1(X)=4$. By \cref{geq8}, we see that $\dim\fr{g}_{\bb{C}}\geq 8$ if $\fr{g}_{\bb{C}}$ is $3$-step. Thus $\fr{g}_{\bb{C}}$ is abelian or $2$-step nilpotent. Therefore, by the classification of complex nilpotent Lie algebras (\cite{Nil}), $\pi_1(X,x)$ is isomorphic to a lattice in one of the following Lie groups:
\ali{
\bb{R}^4, \bb{R}^2\times H_3(\bb{R}), H_5(\bb{R})\text{ or }H_3(\bb{R})\times H_3(\bb{R})
}
where $H_{2k+1}(\bb{R})$ is $(2k+1)$-dimensional Heisenberg group. We consider the case of $b_1(X)=5$. By the same argument as in the case of $b_1(X)=4$, we can show that $\fr{g}_{\bb{C}}$ is abelian or $2$-step nilpotent. Thus, by the classification of complex nilpotent Lie algebras (\cite{Nil}), $\pi_1(X,x)$ is isomorphic to a lattice in one of the following Lie groups:
\ali{
\bb{R}^5, \bb{R}^3\times H_3(\bb{R})\text{ or }\bb{R}\times H_5(\bb{R}).
}
It is clear that $\pi_1(X,x)$ is isomorphic to a lattice in $\bb{R}^6$ if $b_1(X)=6$. 
\end{proof}
Finally, we show the following theorem. The following is the case of $rk(\pi_1(X,x))=7$. 
\begin{thm}\label{rk=7}
Let $X$ be a smooth quasi-projective variety. If $\pi_1(X,x)$ is torsion-free nilpotent and $rk(\pi_1(X,x))=7$, then $\pi_1(X,x)$ is isomorphic to a lattice in a simply-connected real  nilpotent Lie group such that the complexification $\fr{n}_{\bb{C}}$ of the Lie algebra $\fr{n}$ of $N$ is isomorphic to one of the following Lie algebras: 
\ali{
\bullet\ &\bb{C}^7&\text{ if }b_1(X)=7,\\ 
\bullet\ &\bb{C}^4\o\fr{n}_3(\bb{C}), \bb{C}^2\o L_{5,4} \text{ or }\fr{n}_7^{152}&\text{ if }b_1(X)=6,\\
\bullet\ &\bb{C}\o\fr{n}_3(\bb{C})\o\fr{n}_3(\bb{C})\text{ or }(\text{ other $2$-step nilpotent such that $\dim H^1(\fr{n})=5$})&\text{ if }b_1(X)=5,\\
\bullet\ &\fr{n}_7^{142}\text{ or }\fr{n}_7^{143}&\text{ if }b_1(X)=4
}
where $L_{5,4}$ is the complexification of the Lie algebra of $H_5(\bb{R})$, $\fr{n}_7^{152}$ is the complexification of the Lie algebra of $H_7(\bb{R})$, $\fr{n}_7^{142}$ and $\fr{n}_7^{143}$ are given by
\ali{
\fr{n}_7^{142}=\ang{X_1,\dots,X_7}: [X_1,X_i]=X_{i-1}\text{ for i=3,5,7 },[X_3,X_5]=X_4,[X_5,X_7]=X_2,
}
and
\ali{
\fr{n}_7^{143}=\ang{X_1,\dots,X_7}: [X_1,X_i]=X_{i-1}\text{ for i=3,5,7 },[X_3,X_5]=X_6,[X_5,X_7]=X_2
.}
\end{thm}
\begin{proof}
By \cref{twogen} and \cref{threegen}, we see that $rk(\pi_1(X,x))\leq 4$ if $b_1(X)\leq 3$. Thus $b_1(X)\geq 4$. Let $N$ be a simply-connected nilpotent Lie group such that $\pi_1(X,x)$ is isomorphic to a lattice in $N$ and $\fr{g}$ the Lie algebra of $N$. By \cref{geq8} and the same argument as in \cref{rk<=6}, we see that $\fr{g}_{\bb{C}}$ is abelian or $2$-step nilpotent. We first assume that $b_1(X)=\dim H^1(\fr{g})=4$ and $\fr{g}_{\bb{C}}$ is $2$-step nilpotent. We define 
\ali{
(x,y,z):=(\dim H_{1,0}^1,\dim H_{0,1}^1,\dim H_{1,1}^1).
}
\begin{claim}
$(x,y,z)=(2,2,0)$.
\end{claim}
\begin{proof}
Assume that $(x,y,z)=(1,1,2)$. Let $X_1,\overline{X_1}$ and $Y_1,\overline{Y_1}$ be elements of a basis such that the dual elements $x_1,\overline{x_1},y_1,\overline{y_1}$ form a basis $H^1(\fr{g})=H_{1,0}^1\o H_{0,1}^1\o H_{1,1}^1=\ang{[x_1]}_{1,0}\o\ang{[\overline{x_1}]}_{0,1}\o\ang{[y_1],[\overline{y_1}]}_{1,1}$. We can show that $H_{3,1}^2,H_{1,3}^2\neq \0$ if $[X_1,\overline{X_1}]=0$. This contradicts the condition {\bf (W)}. Hence $[X_1,\overline{X_1}]\neq 0$. In addition, we can show that $H_{3,3}^2\neq \0$. Moreover we can show that $H_{3,3}^2\neq \0$ if $(x,y,z)=(0,0,4)$ by the same argument as in the case $[X_1,\overline{X_1}]=0$. Thus $(x,y,z)=(2,2,0)$.
\end{proof}
Let $\fr{g}_{-1,0}=\ang{X_1,X_2}$ and $\fr{g}_{0,-1}=\ang{\overline{X_1},\overline{X_2}}$. Since $\dim\fr{g}_{\bb{C}}=7$, we have $\dim\fr{g}_{-1,-1}=3$.
 By the classification of nilpotent Lie algebras of dimension up to $7$ over $\bb{C}$ (\cite{Nil}), we obtain the following lemmas. In particular, the following immediately follows from the classification of nilpotent Lie algebras over $\bb{C}$ in \cite{Nil}.
\begin{claim}\label{rk7b=4}
$\fr{g}_{\bb{C}}$ is isomorphic to one of the following Lie algebra:
\ali{
\fr{n}_7^{142}=\ang{X_1,\dots,X_7}: [X_1,X_i]=X_{i-1}\text{ for i=3,5,7 },[X_3,X_5]=X_4,[X_5,X_7]=X_2
}
\ali{
\fr{n}_7^{143}=\ang{X_1,\dots,X_7}: [X_1,X_i]=X_{i-1}\text{ for i=3,5,7 },[X_3,X_5]=X_6,[X_5,X_7]=X_2
.}
\end{claim}
\begin{proof}
We may assume that $Z_1:=[X_1,\overline{X_1}],Z_2:=[X_1,\overline{X_2}]$ and $Z_3:=[X_2,\overline{X_1}]$ form a basis of $\fr{g}_{-1,-1}$. Let $[X_2,\overline{X_2}]=aZ_1+bZ_2+cZ_3$. Note that
\ali{\overline{Z_1}&=\overline{[X_1,\overline{X_1}]},=[\overline{X_1},X_1]=-Z_1,\\
\overline{Z_2}&=\overline{[X_1,\overline{X_2}]}=[\overline{X_1},X_2]=-Z_3,\\
\overline{Z_3}&=-Z_2.
}
Let $e_1:=X_1,e_2:=\overline{X_1},e_3:=\overline{X_2},e_4:=X_2,e_5:=Z_1,e_6:=Z_2,e_7:=Z_3$. Then we have the structure constants $\{ C_{ij}^k\}_{i<j}$ such that
\ali{
C_{ij}^k=
\begin{cases}
1&(i,j,k)=(1,2,5),(1,3,6),(4,2,7)\\
a&(i,j,k)=(4,3,5)\\
b&(i,j,k)=(4,3,6)\\
c&(i,j,k)=(4,3,7)\\
0&\text{ otherwise}.
\end{cases}
}
By the classification of complex nilpotent Lie algebras (\cite{Nil}) and since the dimension of the center of $\fr{g}_{\bb{C}}$ is $3$, we have
\ali{
\fr{g}_{\bb{C}}\cong \fr{n}_7^{142}\text{ or }\fr{n}_7^{143}\text{ or }\fr{n}_7^{144}\text{ or }\fr{n}_7^{145}.
}
By the direct computation, we have
\ali{
(\dim H^2(\fr{n}_7^i),\dim H^3(\fr{n}_7^i))
=
\begin{cases}
(11,14)&i=142\\
(11,16)&i=143\\
(11,17)&i=144\\
(12,18)&i=145.
\end{cases}
} 
Since
\ali{
(\dim H^2(\fr{g}_{\bb{C}}),\dim H^3(\fr{g}_{\bb{C}}))
=
\begin{cases}
(11,14)&a\neq 0\text{ or }b\neq 0\\
(11,16)&a=b=0,\\
\end{cases}
}
we have
\ali{
\fr{g}_{\bb{C}}\cong \fr{n}_7^{142}\text{ or }\fr{n}_7^{143}
.}
\end{proof}  
Note that $\fr{g}_{\bb{C}}$ is $2$-step or abelian if $b_1(X)=\dim H^1(\fr{g}_{\bb{C}})=5,6$ or $7$. By the classification of complex nilpotent Lie algebras (\cite{Nil}), we have that $\fr{g}_{\bb{C}}$ is isomorphic to one of the following Lie algebras:
\ali{
&\bb{C}^7&\text{ if }b_1(X)=7,\\
&\bb{C}^4\o\fr{n}_3(\bb{C}), \bb{C}^2\o L_{5,4} \text{ or }\fr{n}_7^{152}&\text{ if }b_1(X)=6,\\
&\bb{C}\o\fr{n}_3(\bb{C})\o\fr{n}_3(\bb{C})\text{ or }(\text{other $2$-step nilpotent such that $\dim H^1(\fr{n})=5$})&\text{ if }b_1(X)=5.
}
\end{proof}
\section{The case of $rk(\pi_1(X,x))=8$}
Let $X$ be a smooth quasi-projective algebraic variety. Assume that $\pi_1(X,x)$ is torsion-free and $3$-step nilpotent. Let $N$ be the simply-connected nilpotent Lie group such that $\pi_1(X,x)$ is a lattice in $N$. By \cref{geq8}, we have $rk(\pi_1(X,x))=\dim N=\dim \fr{g}\geq 8$ where $\fr{g}$ is the Lie algebra of $N$. In this section, we consider the case of $rk(\pi_1(X,x))=8$. We give an example of $3$-step nilpotent Lie group whose lattice may be isomorphic to the fundamental group of a smooth quasi-projective variety.  
\begin{prop}\label{8dim}
Let $\fr{g}=\ang{X_1,X_2,Y_1,Y_2,Z_1,Z_2,A,B}$ such that 
\ali{
[X_1,Y_1]=Z_1=[X_2,Y_2], [X_2,Y_1]=Z_2=[X_1,Y_2],\\
[X_1,Z_1]=A=[X_2,Z_2],[Y_1,Z_1]=B=[Y_2,Z_2]
.}
Then $\fr{g}$ is $3$-step and $\fr{g}\in \mathcal{M}$.
\end{prop}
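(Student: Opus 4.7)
The plan is in three parts: (1) confirm that $\fr{g}$ is a well-defined $3$-step nilpotent Lie algebra, (2) construct an explicit bigrading on $\fr{g}_{\bb{C}}$ compatible with the bracket and complex conjugation, and (3) verify the cohomological condition {\bf (W)} from \cref{W}.

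For (1), the $3$-step property is immediate from the listed brackets: $C^2\fr{g}=\ang{Z_1,Z_2,A,B}$, $C^3\fr{g}=\ang{A,B}$, and $C^4\fr{g}=0$. I would also check the Jacobi identity on triples of generators; the potentially delicate identities (e.g.\ $[X_1,[X_2,Y_1]]+\text{cyc.}=0$) reduce to equalities such as $[X_1,Z_2]=[X_2,Z_1]$, which hold because both sides vanish by hypothesis.

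For (2), I would pass to the complex basis $W_i:=X_i-iY_i$ with $\overline{W_i}:=X_i+iY_i$ ($i=1,2$) and declare the bigrading
\ali{
\fr{g}_{-1,0}&=\ang{W_1,W_2},\quad \fr{g}_{0,-1}=\ang{\overline{W_1},\overline{W_2}},\quad \fr{g}_{-1,-1}=\ang{Z_1,Z_2},\\
\fr{g}_{-2,-1}&=\ang{A-iB},\quad \fr{g}_{-1,-2}=\ang{A+iB},
}
with all other components zero. Conjugation clearly swaps $(p,q)$- with $(q,p)$-blocks, and the brackets fit the decomposition cleanly: $[W_i,\overline{W_j}]\in\fr{g}_{-1,-1}$, $[W_i,Z_j]\in\fr{g}_{-2,-1}$, $[\overline{W_i},Z_j]\in\fr{g}_{-1,-2}$, while all remaining brackets among the chosen basis vectors vanish by inspection.

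For (3), writing $u_i,\overline{u_i},z_i,\alpha,\overline{\alpha}$ for the dual basis (so $\alpha$ is dual to $A-iB$), one has $du_i=d\overline{u_i}=0$ while $dz_i,d\alpha,d\overline{\alpha}$ are all nonzero; hence the induced bigrading on $H^1$ contains only $(1,0)$- and $(0,1)$-pieces, so the $H^1$-part of {\bf (W)} is automatic. The main obstacle is the $H^2$-part: I must show $H^2_{p,q}=0$ for every forbidden bidegree $(p,q)$ with $C^2_{p,q}\neq 0$, namely $(3,1),(1,3),(3,2),(2,3),(3,3)$ (the bidegrees $(4,2),(2,4)$ are automatic since $\alpha\w\alpha=0=\overline{\alpha}\w\overline{\alpha}$). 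No $1$-forms sit in any of these bidegrees, so it suffices to show that $d|_{C^2_{p,q}}$ is injective, which reduces to short independence checks: $d(u_i\w\alpha)=\pm u_1\w u_2\w z_{3-i}$ are independent in $C^3_{3,1}$; $d(z_i\w\alpha)$ contains a distinct $u_j\w z_1\w z_2$ term for each $i$; and $d(\alpha\w\overline{\alpha})$ is a nonzero sum of linearly independent $3$-forms $u_i\w z_i\w\overline{\alpha}$ and $\alpha\w\overline{u_i}\w z_i$. The bidegrees $(1,3)$ and $(2,3)$ follow by complex conjugation, yielding $\fr{g}\in\mathcal{M}$.
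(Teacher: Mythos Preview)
Your proposal is correct and follows essentially the same route as the paper: both construct the bigrading by setting $\fr{g}_{-1,0}$ and $\fr{g}_{0,-1}$ to be spanned by $X_j\pm iY_j$, placing $Z_1,Z_2$ (up to scalar) in bidegree $(-1,-1)$ and $A\pm iB$ in bidegrees $(-2,-1),(-1,-2)$, and then verify {\bf (W)} by computing the induced bigrading on $H^1$ and $H^2$. The only cosmetic differences are that the paper uses $A_j=X_j+iY_j$ in $\fr{g}_{-1,0}$ whereas you use $W_j=X_j-iY_j$ (a harmless $(p,q)\leftrightarrow(q,p)$ swap), and the paper writes out $H^2(\fr{g}_{\bb C})$ explicitly while you instead check injectivity of $d$ on the forbidden bidegrees $C^2_{3,1},C^2_{3,2},C^2_{3,3}$ and their conjugates---an equivalent verification.
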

\begin{proof}
We can easily check that $\fr{g}$ is $3$-step. We show that $\fr{g}\in\mathcal{M}.$ We define the bigrading on $\fr{g}_{\bb{C}}$ to be 
\ali{
\fr{g}_{\bb{C}}&=\ang{A_1,A_2}_{-1,0}\o\ang{\overline{A_1},\overline{A_2}}_{0,-1}\o\ang{B_1,B_2}_{-1,-1}\o\ang{C_1}_{-2,-1}\o\ang{\overline{C_1}}_{-1,-2}
}
where $A_1:=X_1+iY_1,A_2:=X_2+iY_2,B_1:=-2iZ_1,B_2:=-2iZ_2$ and $C_1:=-2i(A+iB)$. Thus the bigrading defines a mixed Hodge structure on $\fr{g}$.
Let $a_1,a_1,\overline{a_1},\overline{a_2},b_1,b_2,c_1$ and $\overline{c_1}$ be the dual basis. Since 
\ali{
[A_1,\overline{A_1}]=B_1,[A_2,\overline{A_2}]=B_2,\\
[A_1,B_1]=C_1=[A_2,B_2],\\
[\overline{A_1},\overline{B_1}]=\overline{C_1}=[\overline{A_2},\overline{B_2}],
}
and $[X,Y]=0\text{ for other }X,Y\in \{A_1,\dots, \overline{C_1}\},$ we have 
\ali{
da_1&=0=da_2=d\overline{a_1}=d\overline{a_2},\\
db_1&=-a_1\w \overline{a_1},db_2=-a_2\w \overline{a_2},\\
dc_1&=-a_1\w b_1-a_2\w b_2,d\overline{c_1}=\overline{a_1}\w b_1+\overline{a_2}\w b_2.
}
Hence the induced bigradings on the first and second cohomologies are given by
\ali{
H^1(\fr{g})=\ang{[a_1],[a_2]}_{1,0}\o\ang{[\overline{a_1}],[\overline{a_2}]}_{0,1}
}
and
\ali{
H^2(\fr{g})&=\ang{[a_1\w a_2]}_{2,0}\o\ang{[\overline{a_1}\w \overline{a_2}]}_{0,2}\o\ang{[a_1\w \overline{a_2}],[\overline{a_1}\w a_2]}_{1,1}\\
&\o\ang{[a_1\w b_1]}_{2,1}\o\ang{[\overline{a_1}\w b_1]}_{1,2}.
}
Therefore $\fr{g}\in\mathcal{M}.$
\end{proof}
\begin{rem}
Let $\fr{g}=\ang{X_1,X_2,Y_1,Y_2,Z_1,Z_2,A,B}$ be the Lie algebra in \cref{8dim}. Then $\fr{g}$ is isomorphic to the Lie subalgebra $\overline{\fr{g}}$ of $\fr{gl}_9(\bb{R})$ given by
\ali{
\overline{\fr{g}}:=\left \{\begin{pmatrix}
0&0&y_2&y_1&-z_2&-z_1&0&0&3b\\
0&0&x_2&x_1&0&0&-z_2&-z_1&3a\\
0&0&0&0&x_1&x_2&-y_1&-y_2&2z_2\\
0&0&0&0&x_2&x_1&-y_2&-y_1&2z_1\\
0&0&0&0&0&0&0&0&y_2\\
0&0&0&0&0&0&0&0&y_1\\
0&0&0&0&0&0&0&0&x_2\\
0&0&0&0&0&0&0&0&x_1\\
0&0&0&0&0&0&0&0&0\\
\end{pmatrix}
;
x_1,x_2,y_1,y_2,z_1,z_2,a,b\in\bb{R}\right \}
,}
which can be proved by constructing the isomorphism $\phi :\fr{g}\rightarrow \overline{\fr{g}}$ such that
\ali{
\phi (X_1)&:=e_{2,4}+e_{3,5}+e_{4,6}+e_{8,9}\\
\phi (X_2)&:=e_{2,3}+e_{3,6}+e_{4,5}+e_{7,9}\\
\phi (Y_1)&:=e_{1,4}-e_{3,7}-e_{4,8}+e_{6,9}\\
\phi (Y_2)&:=e_{1,3}-e_{3,8}-e_{4,7}+e_{5,9}\\
\phi (Z_1)&:=-e_{1,6}-e_{2,8}+2e_{4,9}\\
\phi (Z_2)&:=-e_{1,5}-e_{2,7}+2e_{3,9}\\
\phi (A)&:=3e_{2,9}\\
\phi (B)&:=3e_{1,9}.
}
Furthermore, the corresponding simply-connected nilpotent Lie group $N(\bb{R})$ is given by
\ali{
N(\bb{R})=\left \{\begin{pmatrix}
1&0&y_2&y_1&-z_2&-z_1&0&\frac{1}{2}(y_1+y_2)&3b\\
0&1&x_2&x_1&0&\frac{1}{2}(x_1+x_2)&-z_2&-z_1&3a\\
0&0&1&0&x_1&x_2&-y_1&-y_2&2z_2\\
0&0&0&1&x_2&x_1&-y_2&-y_1&2z_1\\
0&0&0&0&1&0&0&0&y_2\\
0&0&0&0&0&1&0&0&y_1\\
0&0&0&0&0&0&1&0&x_2\\
0&0&0&0&0&0&0&1&x_1\\
0&0&0&0&0&0&0&0&1\\
\end{pmatrix}
;
x_1,x_2,y_1,y_2,z_1,z_2,a,b\in\bb{R}\right \}
.}
\end{rem}
\begin{prob}
Does there exist a smooth quasi-projective variety $M$ such that $\pi_1(M,x)$ is  a lattice in $N(\bb{R})$?
\end{prob}
\section{An algebraic remark}
\subsection{Gap between gradings and bigradings}
In \cite{grading}, we study lattices in simply-connected nilpotent Lie groups which could be realized as the fundamental groups of smooth quasi-projective varieties by using graded Lie algebras satisfying certain conditions. The following, the conditions of graded Lie algebras, immediately follows from \cref{W}.
\begin{cor}[\cite{Morgan,grading}]
Let $M$ be a any smooth quasi-projective variety, $x\in M$ based point.
 We assume that  the fundamental group $\pi_{1}(M,x)$ is a lattice in a simply connected nilpotent Lie group $N$.
 Then there exists a grading $\fr{n}_{\bb{C}}=\dis{\bo_{i\leq -1}\fr{n}_i}$ such that:
\\\ \indent
{\bf (W):} For the induced grading $H^*(\fr{n}_{\bb{C}})=\dis{\bo_{i\geq 1}H_i^*}$, we have
\ali{H^1(\fr{n}_{\bb{C}})=H_1^1\o H_2^1}
\\\ \indent
and
\ali{H^2(\fr{n}_{\bb{C}})=H_2^2\o H_3^2\o H_4^2.} 
\\\ \indent
{\bf (H):} For any odd integer $k$, $\dim{\fr{n}_k}$ and $\dim{H_k^j}$ are even.
\end{cor}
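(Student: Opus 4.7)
The plan is to deduce this corollary from Morgan's theorem (\cref{W}) by collapsing the bigrading of a mixed Hodge structure to a single grading indexed by the total weight $p+q$. Concretely, starting from the bigrading $\fr{n}_{\bb{C}}=\bo_{p,q\leq 0,\ p+q\leq -1}\fr{n}_{p,q}$ provided by \cref{W}, I would define
\[
\fr{n}_i:=\bo_{p+q=i}\fr{n}_{p,q}\qquad(i\leq -1).
\]
Compatibility of the bigrading with the Lie bracket, $[\fr{n}_{p,q},\fr{n}_{s,t}]\subset \fr{n}_{p+s,q+t}$, immediately upgrades this to a Lie algebra grading, and by the same mechanism the induced grading on cohomology becomes $H^j_i=\bo_{p+q=i}H^j_{p,q}$.

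Verifying \textbf{(W)} in the single-grading form is then a matter of regrouping Morgan's output. From $H^1(\fr{n}_{\bb{C}})=H^1_{1,0}\o H^1_{0,1}\o H^1_{1,1}$ I read off $H^1_1=H^1_{1,0}\o H^1_{0,1}$ and $H^1_2=H^1_{1,1}$, while from the six-term expression for $H^2(\fr{n}_{\bb{C}})$ I obtain $H^2_2=H^2_{2,0}\o H^2_{1,1}\o H^2_{0,2}$, $H^2_3=H^2_{2,1}\o H^2_{1,2}$, and $H^2_4=H^2_{2,2}$. This is exactly \textbf{(W)}.

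For the parity condition \textbf{(H)} the key input is the symmetry coming from the pure Hodge structure on each weight-graded piece. Passing to $Gr_k^W(\fr{n}_{\bb{C}})\cong\bo_{p+q=k}\fr{n}_{p,q}$, each summand $\fr{n}_{p,q}$ is identified with the Hodge component $\mathcal{H}_{p,q}$ of $Gr_k^W$, for which genuine complex conjugation gives $\overline{\mathcal{H}_{p,q}}=\mathcal{H}_{q,p}$ and hence $\dim \fr{n}_{p,q}=\dim \fr{n}_{q,p}$. When $k=p+q$ is odd we automatically have $p\neq q$, so the set $\{(p,q):p+q=k\}$ splits into pairs $(p,q)\leftrightarrow(q,p)$ contributing equal dimensions, giving that $\dim \fr{n}_k$ is even. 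Applying the same argument to the induced mixed Hodge structure on $H^j(\fr{n}_{\bb{C}})$ yields $\dim H_k^j$ even whenever $k$ is odd.

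The one place that requires care, and which I expect to be the main (mild) obstacle, is the step $\dim \fr{n}_{p,q}=\dim \fr{n}_{q,p}$: the defining relation $\overline{\fr{n}_{p,q}}\equiv \fr{n}_{q,p}$ from the definition of a mixed Hodge structure holds only modulo the lower-weight subspace $\bo_{s+t<p+q}\fr{n}_{s,t}$, so dimensional equality cannot be concluded directly from conjugation on $\fr{n}_{\bb{C}}$. The clean workaround is precisely to pass through the projection $\fr{n}_{p,q}\hookrightarrow W_{p+q}\fr{n}_{\bb{C}}\twoheadrightarrow Gr^W_{p+q}\fr{n}_{\bb{C}}$, which is an isomorphism onto $\mathcal{H}_{p,q}$, and to read the parity identity there; all other steps are formal regroupings of Morgan's theorem.
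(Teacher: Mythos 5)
Your proposal is correct and follows essentially the same route as the paper, which likewise obtains \textbf{(W)} by passing to the total grading $\fr{n}_i:=\bo_{p+q=i}\fr{n}_{p,q}$ and \textbf{(H)} from the Hodge symmetry $\overline{\fr{n}_{p,q}}\equiv \fr{n}_{q,p}\bmod \bo_{s+t<p+q}\fr{n}_{s,t}$. Your extra care in routing the dimension count through $Gr^W_{p+q}$ (where conjugation genuinely swaps $\mathcal{H}_{p,q}$ and $\mathcal{H}_{q,p}$) fills in a detail the paper leaves implicit, but it is the same argument.
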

The condition {\bf (W)} holds by taking the total grading $\fr{n}_i:=\bo_{p+q=i}\fr{n}_{p,q}$. The condition {\bf (H)} holds by the Hodge symmetry $\overline{\fr{n}_{p,q}}\equiv \fr{n}_{q,p}\mod \bo_{s+t<p+q}\fr{n}_{s,t}$.
In this paper, we obtain sharper criteria for a lattice in a simply-connected nilpotent Lie group to be the fundamental group of a smooth quasi-projective variety. Furthermore, our classification in dimensions up to 6 includes the results of \cite{grading}.
\begin{rem}
The following Lie algebras
\ali{
L_{5,9}=\ang{X_1,X_2,X_3,X_4,X_5}:&[X_1,X_2]=X_3,[X_2,X_3]=X_4,[X_1,X_3]=X_5,\\
L_{6,9}=L_{5,9}\o\bb{C},\hspace{2.1cm}&\\
L_{6,21}(-1)=\ang{X_1,X_2,X_3,X_4,X_5,X_6}:&[X_1,X_2]=X_3, [X_2,X_3]=X_4,[X_1,X_3]=X_5,\\&[X_1,X_4]=X_6=[X_2,X_5],\\
L_{6,22}(0)=\ang{X_1,X_2,X_3,X_4,X_5,X_6}:&[X_2,X_4]=X_5, [X_4,X_1]=X_6=[X_2,X_3],\\
L_{6,24}(0)=\ang{X_1,X_2,X_3,X_4,X_5,X_6}:&[X_1,X_3]=X_4, [X_3,X_4]=X_5,[X_1,X_4]=X_6=[X_3,X_2]
}
and
\ali{
L_{6,24}(1)=\ang{X_1,X_2,X_3,X_4,X_5,X_6}:&[X_1,X_2]=X_3, [X_2,X_3]=X_5=[X_2,X_4],[X_1,X_3]=X_6
}
do not admit a bigrading satisfying {\bf (W)}. However, these admit  gradings satisfying {\bf (W)} and {\bf (H)} $($ see \cite{grading}$)$. Thus the bigradings provide sharper criteria for lattices in simply-connected nilpotent Lie groups to be realized as the fundamental groups of smooth quasi-projective varieties than gradings.
\end{rem}

\end{document}